\numberwithin{equation}{section}	
\begin{document}
\setlength{\parindent}{0cm}
\setlength{\parskip}{0.3cm}

\title{Lax formulation for harmonic maps to a moduli of bundles}

\author{Richard Derryberry}



\date{\today}

\begin{abstract}
I define an algebraic metric and closed 3-form on a subspace $\mc{M}$ of the moduli of $G$-bundles on a complex projective curve $C$, and show that the resulting two-dimensional $\sigma$-model with target $\mc{M}$ has a zero-curvature formulation.
\end{abstract}

\maketitle




\section{Introduction}\label{s:intro}

Harmonic maps are mappings between pseudo-Riemannian manifolds that satisfy a certain generalisation of Laplace's equation. The exact partial differential equation can be derived as the critical points of an action functional. Usually this functional is taken to be the Dirichlet energy, however I will take the slightly broader view that the functional merely needs to be the action functional of a classical $\sigma$-model:\footnote{It is not important for this paper to know what this shade of physical inspiration means, but for the curious: A $\sigma$-model is a (classical or quantum) field theory whose fields are maps $\sigma: \Sigma \to \mc{M}$, where $\Sigma$ is a `spacetime' manifold, and $\mc{M}$ is the `target' manifold.} e.g.\ the functional considered in this paper can be found at \eqref{eq:action}.

Harmonic maps from a surface to a Lie group are famously integrable, in the sense that they admit a Lax, or zero-curvature, formulation \cite{Poh76}. There is a rich literature on integrable harmonic map equations, ranging familiar coset models (particularly Riemannian symmetric spaces) \cite{Byk16,Zar19} to affine Gaudin models \cite{DLMV19a,DLMV19b} and integrable $\mc{E}$-models \cite{LV21}. However, in all of these examples the target space is (topologically!) a group manifold or coset space, and the spectral curve is genus zero. The core novelty of this paper is the expansion of possible target space topologies to certain moduli spaces of bundles, and the existence of integrable $\sigma$-models with spectral curve of arbitrary genus.

In this paper I will introduce an infinite number of new examples, approximately parametrized by the choice of a Riemann surface equipped with a holomorphic 1-form with simple zeroes and double poles. The target space is an open subspace of the moduli of bundles on the Riemann surface, which I will show may be endowed with an algebraic metric (so that pseudo-Riemannian manifolds may be obtained by taking a real slice of the target space). Algebraic metrics are significantly rarer than their pseudo-Riemannian counterparts, and the existence of one on the moduli of bundles is certainly unexpected. Another novelty of the construction I will present is that the spectral curves of the integrable systems may be of arbitrary genus, and not just of genus 0 (as is the case for the previously known examples in the literature). The construction in this paper is based on the physical engineering of a class of new integrable $\sigma$-models from 4d Chern-Simons theory by Costello and Yamazaki \cite{Costello:2019tri}, which I will briefly review in Section \ref{s:setup}.


\subsection{Integrable systems}\label{ss:int_sys}

Having introduced the concept without giving a precise definition, it seems prudent to ask to the question: what is an integrable system? A first approximation to a definition might be as follows: an integrable system is a \emph{system} of differential equations that can be \emph{integrated}, or in plain language, solved. Of course this naive definition is deficient: it captures too many systems, while failing to describe the nature or method of solution.

As a warm up to integrable systems in classical field theory, let's consider integrable systems in classical mechanics. The dynamics of a system in classical mechanics is described by a path in \emph{phase space}, a $2n$-dimensional symplectic manifold that describes all possible states of the physical system. The symplectic form gives rise to a Poisson bracket on functions $\{-,-\}$, and $\{f,g\} = 0$ if and only if the function $f$ is constant along the level sets of $g$ (and vice-versa). The physical system is described by a particular function call the Hamiltonian $H$, and a function that Poisson commutes with $H$ is called a \emph{conserved quantity} or \emph{integral of motion}.

Integrability in this context is the following statement: that one can find $n$ algebraically independent Poisson commuting conserved quantities. This allows one to solve the system of differential equations determined by $H$ as follows:
\begin{itemize}
    \item By the Arnold-Liouville theorem, there exists a canonical collection of coordinates on the phase space called \emph{action-angle} coordinates \cite{Arn89}. Roughly, the action coordinates parametrize the base of possible values of our $n$ conserved quantities, while the angle coordinates\footnote{If our fibres are compact, they will be real tori.} parameterize the fibres over each point in the base.
    \item In order to have a well-posed problem, we need to supplement our system of differential equations with $2n$ pieces of initial data. Start by choosing $n$ of these to be value in our base of conserved quantities. Since they are conserved, the path that describes the solution to our system of differential equations must stay in the fibre over this point: i.e.\ we have fixed the action coordinates, and the dynamics is now entirely concentrated in the angle coordinates.
    \item The path that describes the solution will vary linearly in the angle coordinates. Et voil\`a! We have determined the path in phase space that solves our system of differential equations with given initial conditions!
    \end{itemize}

For this paper, I'm interested not in classical mechanical systems, but in classical \emph{field} theories. Here the situation is trickier: the phase space of such a theory is infinite dimensional, and so one must try to cook up \emph{infinitely many} Poisson commuting functions on phase space and attempt to use these to solve the equations of motion, for instance via the inverse scattering method \cite{FT86}.

The key concept underlying the integrability of these infinite dimensional dynamical systems is that of the \emph{Lax pair}, or \emph{zero-curvature}, formulation of the system. The basic idea here is to determine a recipe that for each classical field $\sigma$ produces a connection $D(\sigma)$ on spacetime, such that $\sigma$ satisfies the classical equations of motion if and only if the connection $D(\sigma)$ is flat. Integrals of motion may then be obtained by applying invariant polynomials to the monodromies of this connection.

As described above, we would still only obtain finitely many integrals of the motion for our infinite dimensional system. To remedy this, one usually produces for each $\sigma$ not a single connection but a \emph{family} of connections $D(\sigma)_z$ parametrized by $z\in C$, the \emph{spectral parameter} on an algebraic curve called the \emph{spectral curve}. The flatness condition becomes the requirement that $D(\sigma)_z$ is simultaneously flat for all values of $z\in C$. Applying invariant polynomials to the monodromies of this family of connection then produces infinitely many integrals of the motion; for instance, if $C=\mb{C}^\times$ one can obtain a countable family of commuting conserved quantities as the coefficients of a power series expansion of\footnote{Or a function derived from this one.} $\Tr\Hol D(\sigma)_z$ around $z=0$ or $z=\infty$ \cite{FT86}.


\subsection{An explicit example}\label{ss:explicit}

For concreteness, let's suppose that our spacetime is $\Sigma = S^1\times\mb{R}$ (or in the algebraic setup of this paper, $\mb{C}^\times \times \widehat{\mb{D}}$). Let $\mc{M}$ be any space for which the harmonic map equations are integrable, and let $C$ be the spectral curve; for example, $\mc{M} = G$ and $C = \mb{C}^\times$.  Let $\Harm(\Sigma)$ denote the moduli of solutions to the harmonic map equations.

Then for each field $\sigma\in\Harm(\Sigma)$ there is a family of flat connections $D(\sigma)_z$, $z\in C$, on spacetime. We can take its holonomy around $\mb{C}^\times$ (the exact loop does not matter due to flatness), and apply an invariant function $f\in \mb{C}[G]^G$ to get a number
    \[   \mc{F}_{f,z}(\sigma) = f(\Hol_{\mb{C}^\times}D(\sigma)_z) \in \mb{C}.   \]
As $\sigma$ varies this gives a function $\mc{F}_{f,z}: \Harm(\Sigma) \to \mb{C}$ for each invariant function $f$ and point $z$ in the spectral curve. These functions are all conserved by flatness of the connections $D(\sigma)$, and Poisson commute with each other \cite{BBT03}. So we've found infinitely many commuting conserved quantities.

It remains to ask: what is the space $\Harm(\Sigma)$? Working on the formal disc, we can expand a solution as a formal power series with coefficients in the algebraic loop group of the target space $\mc{M}$. Because the equations of motion are second order, the solution only depends on a point and a first order variation in the loop group. Hence $\Harm(\Sigma) \cong TL\mc{M}$. The metric on $\mc{M}$ induces a metric on $L\mc{M}$, so we can identify this with $T^\ast L\mc{M}$. This suggests that we have found infinitely many conserved quantities for a quantum mechanical system on the loop space of $\mc{M}$.

\begin{remark}\label{rk:caution}
An important word of caution: I say `suggests' above because it is not clear that the Poisson structure obtained from the harmonic map equations and the canonical Poisson structure on $T^\ast L\mc{M}$ agree.
\end{remark}


\subsection{Future directions}\label{ss:future}

The work in this paper represents only the beginning of what could be studied for these harmonic map equations:
\begin{itemize}
    \item The existence of a zero-curvature formulation with higher genus spectral curve $C$ suggests that there may be an action on the space of solutions by the group of maps from $C$ into $G$. For genus zero this is the entry point for the loop group action on solutions that allows one to cook up non-trivial solutions and understand the geometry of the moduli space of harmonic maps \cite{Uhl89}. The hope is that a similar action could be exploited to similar effect in the higher genus case.
    \item The 1-loop $\beta$-function of the $\sigma$-model corresponding to our harmonic map equations is a modified Ricci flow equation on the target space \cite{CFMP85}. A priori this modified Ricci flow equation is difficult to understand; however Costello has communicated to me the following conjecture regarding its behaviour:
    \begin{conj}[Costello]\label{conj:cost}
        Let $\mc{N}$ denote the moduli space of Riemann surfaces equipped with a holomorphic 1-form $\omega$ that has only simple zeroes and double poles, together with a decomposition of the zeroes of $\omega$ into two equally sized groups, $D_1$ and $D_2$.

        Then the modified Ricci flow on the space of metrics on the target is identical to the flow generated by a certain flow on $\mc{N}$, where:
        \begin{itemize}
            \item The closed periods $\oint\omega$ are held fixed.
            \item The periods $\int_p^q \omega$ are held fixed when $p,q$ are both in either $D_1$ or $D_2$.
            \item When $p\in D_1$ and $q\in D_2$ the periods $\int_p^q \omega$ satisfy $\left.\frac{d}{d\epsilon}\int_p^q\omega\right|_{\epsilon=0} = 1$, where $\epsilon$ is the parameter of the flow on $\mc{N}$.
        \end{itemize}
    \end{conj}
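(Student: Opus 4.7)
The plan is to compute both sides of the conjectured identity in a common infinitesimal framework and match them. Write $g[\omega]$ for the algebraic metric on $\mc{M}$ and $H[\omega]$ for the associated closed 3-form; both depend on the data $(C,\omega)\in\mc{N}$, and together they determine a $\sigma$-model whose one-loop $\beta$-function (in CFMP form) is
\[
\beta_{ij}^g = R_{ij}(g) - \tfrac{1}{4}H_{ikl}H_j{}^{kl} + \text{(dilaton terms)},
\]
with the modified Ricci flow given by $\partial_t g_{ij} = -\beta_{ij}^g$. The goal is to identify this tensor field on $\mc{M}$ with the pullback of a specific tangent vector $\xi\in T_{(C,\omega)}\mc{N}$ along the parametrization $\mc{N}\to\{\text{metrics}\}$.

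First I would pin down the infinitesimal picture on $\mc{N}$. Tangent vectors at $(C,\omega)$ lie in a relative cohomology group of the form $H^1(C\setminus\{\text{poles}\},\{\text{zeroes}\};\mb{C})$, and a Riemann-bilinear-style duality identifies such classes with functionals on the period lattice. The three constraints in the conjecture pin down a unique class $\xi$, which I would represent explicitly as a cocycle supported in a tubular neighbourhood of the zeroes of $\omega$, carrying opposite signs on $D_1$ and $D_2$. This representative makes the $D_1/D_2$ asymmetry manifest.

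Next I would compute $\beta^g$ for $g[\omega]$. Since the metric, 3-form, and their derivatives are all built from residue pairings involving $\omega$, I expect $R_{ij}(g)$ and $H_{ikl}H_j{}^{kl}$ to be individually unwieldy but their combination to collapse dramatically under Stokes' theorem on $C$—essentially the same integration-by-parts mechanism that produces the Lax formulation in the body of the paper. After cancellations, $\beta^g$ should localize as a sum of contributions supported at the zeroes and poles of $\omega$. On the other side, I would differentiate $g[\omega_\epsilon]$ directly along the flow generated by $\xi$, using Kodaira-Spencer to account for the variation of the underlying curve $C$, and re-express the answer in the same period/residue basis.

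The key step, and the main obstacle, is matching the two expressions: proving that the localized contributions to $\beta^g$ reproduce exactly the $D_1$-vs-$D_2$ signed sum encoded in $\xi$. The sign difference is the essential content, and it should arise from the fact that the splitting $\{\text{zeroes}\} = D_1\sqcup D_2$ is precisely the data used by Costello and Yamazaki to define the Wess-Zumino term; this asymmetric input should correspond to an asymmetric output, but verifying the matching of coefficients requires careful bookkeeping of boundary contributions and a consistent choice of gauge representative for $H[\omega]$. A cleaner conceptual route would be to interpret $\mc{N}$ as the base of a special-K\"ahler-type family associated to the Hitchin-style setup and recognize the modified Ricci flow as the gradient or Hamiltonian flow of a distinguished function on this base, reducing the problem to identifying two canonical vector fields on $T\mc{N}$. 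In either approach, I would first verify the statement in the simplest nontrivial case—say $C=\mb{P}^1$ with two double poles and the minimal allowed number of zeroes—before attempting the general argument.
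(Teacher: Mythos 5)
First, a point of order: the statement you are addressing is labelled a \emph{conjecture}, and the paper does not prove it. What the paper supplies (Section \ref{ss:CP1-betafnc}) is evidence in a single example: $C=\mb{CP}^1$ with $\omega=\frac{(z-p_1)(z-p_2)}{z^2}dz$, where the target is $\mc{M}\cong G$ carrying a multiple of the bi-invariant metric, so that $\Ric_{ab}=-\frac{1}{4}\kappa_{ab}$ and the $H^2$-term reduce to Lie-algebraic expressions in $\kappa$ and the structure constants. There the flow on $\mc{N}$ becomes the explicit motion $p_1\mapsto p_1+\varepsilon c$, $p_2\mapsto p_2-\varepsilon c$ with $c=p_1p_2/(p_1-p_2)^2$ forced by the two period constraints, and the undetermined normalisation $\tilde c$ of the 3-form in the CFMP formula is fixed to $\tilde c=3$ by demanding that the WZW limit $p_1\to0$, $p_2\to\infty$ have vanishing $\beta$-function. (Remark \ref{rk:more-conj-evidence} notes that genus zero with arbitrarily many double poles follows from \cite{DLSS21}.) Your closing suggestion---verify the simplest case first---is in fact the entirety of what the paper does.

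As an argument for the general statement, your proposal has a genuine gap, which you partly acknowledge: the ``key step'' of matching the localized form of $\beta^g$ against the signed period flow \emph{is} the conjecture, and nothing in the proposal carries it out. Concretely: (i) the paper computes only the \emph{first} derivative of the metric in the coordinates $\lambda^{ia}$ (Proposition \ref{p:g-deriv}); the Ricci tensor requires second derivatives together with the inverse metric, i.e.\ iterated insertions of $\delbar_P\inv[A,-]$ and Szeg\"o-kernel contractions, and you give no reason why these should ``collapse dramatically under Stokes' theorem''---the integration-by-parts mechanism behind the Lax formulation controls the first-order package $(g,\Omega,\gd^\pm)$, not curvature. (ii) Even granting localization of $\beta^g$ near the zeroes of $\omega$, you offer no mechanism fixing the \emph{coefficients} of those local contributions, which is where the $D_1$-versus-$D_2$ sign and the normalisation ``$=1$'' in the conjecture live; in the paper's example this normalisation only emerges after pinning $\tilde c$ via the WZW limit, a step your framework has no analogue of. The special-K\"ahler/gradient-flow alternative is attractive but is offered without any candidate potential function. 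In its present form the proposal is a reasonable research programme, not a proof, and it neither reproduces the paper's special-case computation nor closes the general case.
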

    Some evidence for this conjecture is presented in section \ref{ss:CP1-betafnc}.
    \item Finally, the flat connections on our target space can be interpreted via pushforward as D-modules on $\BunSt_G(C)$. We can therefore ask the standard question: what are the corresponding Langlands dual objects?
\end{itemize}


\subsection{Structure of the paper}\label{ss:outline}

The structure of the paper is as follows:

In Section \ref{s:setup} I introduce the core problem of the paper, and give a brief review of the motivating physical analysis of Costello and Yamazaki.

Sections \ref{s:metric}--\ref{s:action-variation} are concerned with the definition of the field theories introduced by Costello and Yamazaki. These theories are two-dimensional $\sigma$-models, and thus require a metric (Section \ref{s:metric}) and a closed 3-form (Section \ref{s:3form-def}) be defined on the target space. Once these have been defined, one can write down the action of the classical field theory and derive its equations of motion (Section \ref{s:action-variation}). In Section \ref{s:metric} I also discuss some properties of the metric: I calculate its first derivative in a natural collection of coordinates, and consider the signature of an induced pseudo-Riemannian metric on a real slice of the target space.

In Sections \ref{s:cxn-target}--\ref{s:cxn-induced} I consider some novel $G$-connections that were predicted by Costello and Yamazaki. In Section \ref{s:cxn-target} I show that there are two different flat algebraic $G$-connections on the target of the $\sigma$-model; in Section \ref{s:cxn-induced} I use these two connections to build a $G$-connection on spacetime for every field in the $\sigma$-model.

In Section \ref{s:main-theorem} I integrate the previous two topics to prove that the classical equations of motion of the $\sigma$-models have a zero-curvature formulation.

Finally, in Section \ref{s:CP1-eg} I consider in detail the example of $\mb{CP}^1$ with two marked points. In special cases this example recovers the motivating example of harmonic maps into Lie groups.


\subsection{Acknowledgements}\label{ss:ack}

I sincerely thank my postdoctoral mentor Kevin Costello for his support during this project, as well as for suggesting the problem; Benoit Vicedo for a productive discussion on a previous version of this paper; David Ben-Zvi, with whom I had multiple extremely useful correspondences; and Tom Mainiero and Sebastian Schulz for comments on a draft version of this paper. Finally, I would like to thank Zahavi Derryberry for annotating my notes, and Felix Derryberry for providing typing assistance.

Research at Perimeter Institute is supported in part by the Government of Canada through the Department of Innovation, Science and Economic Development and by the Province of Ontario through the Ministry of Colleges and Universities.


%

%


\section{Setup and background of the problem}\label{s:setup}

Our starting data is a proper curve $C$ over $\mb{C}$ of genus $g$, equipped with a meromorphic 1-form $\omega$ which has only simple zeros and double poles. Let $Q$ be half the divisor of poles of $\omega$, and divide the divisor of zeroes into two equal degree divisors $P_1$ and $P_2$. We then form the two divisors $D_{1,2} = P_{1,2} - Q$ on $C$ of degree $g-1$ which satisfy $\mc{O}(D_1+D_2) = K_C$.

Define $\BunSt_G(C|D_i)\subset\BunSt_G(C,Q)$ to be the open substack of the moduli of $G$-bundles on $C$ trivialised at $Q$ satisfying the cohomology vanishing condition $H^\bullet(C;\mf{g}_P(D_i))=0$. Note that by our assumptions and Serre duality,
	\[	\BunSt_G(C|D_1)=\BunSt_G(C|D_2) =: \mc{M}.	\]

Finally, let $\mc{P}$ denote the universal $G$-bundle on $C\times\BunSt_G(C,Q)$, and let $C_i := C\setminus D_i$, $C_0 := C_1\cap C_2$.


\subsection{Review of the physical problem}\label{ss:physics-review}

My claim, based on the physical analysis of Costello and Yamazaki in \cite{Costello:2019tri}, is that from this initial data we can construct two a priori unrelated doohickeys:
\begin{itemize}
	\item A two-dimensional classical Lagrangian field theory, specifically a type of $\sigma$-model with target $\mc{M}$. In particular, our starting data allows us to construct a metric and closed 3-form on $\mc{M}$.
	\item For each field $\sigma$ in the $\sigma$-model, a connection $D(\sigma)$ on spacetime.
\end{itemize}
Then the physical analysis in \cite{Costello:2019tri} concludes, and the main theorem of this paper proves, that a field $\sigma$ is a solution to the classical equations of motion for the $\sigma$-model if and only if $D(\sigma)$ is a flat connection.

My approach to the problem will be different to that of Costello and Yamazaki. For the sake of history and motivation, however, let us briefly review their argument:

Costello and Yamazaki begin with the 4d Chern-Simons theory introduced in \cite{C13}. This is the four-dimensional theory with action
\begin{align}\label{eq:CSaction}
	S_{CS}[A] = \frac{1}{2\pi\hbar}\int_{\mb{R}^2\times C} \omega \wed CS(A),
\end{align}
where $A$ is a gauge field and $CS(A) = \tr(A\wed dA + \frac{2}{3}A\wed A \wed A)$. Equipping $\mb{R}^2$ with a complex coordinate $w, \bar{w}$ we may write
	\[	A = A_w dw + A_{\bar w}d\bar w + A_{\bar z}	\]
i.e.\ we assume that $A$ has no $(1,0)$-component in the $C$-direction. The equations of motion for the theory are precisely the flatness equations $F(A) = 0$.

The upshot of \cite{Costello:2019tri} is that coupling the action \eqref{eq:CSaction} to various surface defects and compactifying on the curve $C$ leads to a wide variety of two-dimensional classical field theories which are automatically integrable due to features inherited from the 4d theory. `Integrable' in this situation means `has a Lax/zero curvature formulation'; hence in this framework the main theorem of this paper comes for free once one knows that the 2d theory of interest may be engineered from 4d Chern-Simons theory.

\begin{remark}\label{rk:previous-CP1-work}
In the situation where $C = \mb{CP}^1$ is the Riemann sphere, this procedure recovers the integrable $\sigma$-models originally constructed by different means in \cite{DLMV19a,DLMV19b}.
\end{remark}

To obtain the theory of interest in the Costello-Yamazaki framework, one begins by imposing certain boundary conditions on the gauge fields and gauge transformations in order to account for the poles and zeroes of $\omega$:
\begin{itemize}
	\item At each simple zero of $\omega$, either $A_w$ or $A_{\bar w}$ has a simple pole.
	\item At each pole of $\omega$, $A$ vanishes.
	\item We only allow gauge transformations that vanish at the poles of $\omega$.
	\end{itemize}

Assume for simplicity that we are studying connections on the trivial principal bundle. Then we can consider the component $A_{\bar z}(w,\bar{w})$ as an $\mb{R}^2$-family of holomorphic structures on the bundle via the family of operators $\delbar + A_{\bar z}(w,\bar w)$; this shows us that one of the fields in our compactified theory will be a map $\mb{R}^2\to \BunSt_G(C,Q)$. For a large class of such maps---namely, those that land in $\mc{M}$---one can further solve uniquely for the fields $A_w$ and $A_{\bar w}$. Thus if we impose this restriction on the target space, we see that our 2d compactification is a $\sigma$-model; the Lagrangian of this theory can be determined by inserting the expressions for $A_w, A_{\bar w}$ in terms of $A_{\bar z}$ back into the original Lagrangian.

In this paper I will show that the formulae for the metric, 3-form, and connections described by Costello and Yamazaki are well-defined, and by direct calculation will verify that the conclusion about the zero-curvature formulation of the classical equations of motion holds.


%

%


\section{The metric and its properties}\label{s:metric}


\subsection{Definition of the metric}\label{ss:metric-def}

First, let us define the metric. I will begin by giving the ``cleanest'' definition of the metric as a manifestly algebraic pairing on cohomology groups, before mentioning two cochain models that can be used for computations.

Consider the short exact sequence of sheaves on $C$,
	\[	0 \to \mf{g}_P(-Q) \to \mf{g}_P(D_1) \to \mf{g}_P\tens\mc{O}_{P_1}(P_1)\to 0	\]
which, by our cohomology vanishing assumption, implies the isomorphism
	\[	H^1(C;\mf{g}_P(-Q))\cong
		H^0(C;\mf{g}_P\tens\mc{O}_{P_1}(P_1))\cong
		\bigoplus_{x\in P_1} H^0(C;\mf{g}_P\tens\mc{O}_x(x))	\]
I will describe a pairing that is block diagonal with respect to this direct sum decomposition as follows. Let $\la-,-\ra$ denote a nondegenerate invariant pairing on $\mf{g}$. Then if we are working over the field $\mb{F}$, we define an $\mb{F}$-valued pairing on each block as the following composition:
\[\begin{tikzcd}
	H^0(C;\mf{g}_P\tens\mc{O}_x(x))^{\tens 2}\ar[r,"\la\phantom{ab}\ra"]
	& H^0(C;\mc{O}_x(x))^{\tens 2}\ar[r]
	& H^0(C;\mc{O}_x(2x)) \ar[r,"\cdot\omega"]
	& H^0(C;K_x(x))\ar[r,"\Res_x"]
	& \mb{F}
\end{tikzcd}\]
Note that this is just the value at $P\in\mc{M}$ of an algebraic pairing that can be defined on certain sheaves on $C\times\mc{M}$ pushed forward along the projection to $\mc{M}$. It is symmetric because $\la-,-\ra$ is symmetric and multiplication is symmetric in a commutative ring, and nondegeneracy follows from nondegeneracy of the pairing $\la-,-\ra$. Hence this pairing is an algebraic metric on $\mc{M}$.


\subsubsection{Dolbeault model for the metric}\label{sss:dolbeault-metric}

I will now describe the Dolbeault model for the metric, which will be used in later calculations. Consider the diagram
\[\begin{tikzcd}
	0\ar[r]
		& \Omega_C^{0,0}(\mf{g}_P(-Q))\ar[r,"\rho_i"]\ar[d,"\delbar"]
			& \Omega_C^{0,0}(\mf{g}_P(D_i))\ar[d,"\delbar"] \\
	0\ar[r]
		& \Omega_C^{0,1}(\mf{g}_P(-Q))\ar[r,"\rho_i"]
			& \Omega_C^{0,1}(\mf{g}_P(D_i))\ar[u,bend left,"\delbar_i\inv"]
\end{tikzcd}\]
Let $\la-,-\ra$ denote the same nondegenerate invariant pairing as before. We define the metric at the point $P\in\mc{M}$ by
\begin{align}\label{eq:g}
	A_1,A_2
		\mapsto
		g_P(A_1,A_2):=
		\int_C \omega\wed\la
			\delbar_1\inv\rho_1(A_1)\tens A_2
			+
			\delbar_1\inv\rho_1(A_2)\tens A_1
		\ra,
		\quad
		A_1,A_2\in\Omega_C^{0,1}(\mf{g}_P(-Q)).
\end{align}

\begin{remark}\label{rk:omit-rho}
For the rest of the paper I will leave implicit the embeddings $\rho_i$.
\end{remark}

\begin{prop}\label{p:g-metric}
$g$ defines a metric on $\mc{M}$.
\end{prop}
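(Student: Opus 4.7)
The plan is to establish the proposition by identifying the Dolbeault integral formula \eqref{eq:g} with the algebraic residue pairing described at the start of Section \ref{ss:metric-def}; since that pairing was already observed to be algebraic, symmetric, and nondegenerate, this identification immediately delivers the proposition.

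Before doing the identification, I would first verify that $\delbar_i\inv$ is a well-defined operator. The cohomology vanishing hypothesis $H^\bullet(C;\mf{g}_P(D_1)) = 0$ implies that the Dolbeault differential $\delbar\colon \Omega_C^{0,0}(\mf{g}_P(D_1))\to\Omega_C^{0,1}(\mf{g}_P(D_1))$ has trivial kernel ($H^0$) and is surjective (cokernel $H^1$), so its inverse $\delbar_1\inv$ exists and is unique. In particular, the integrand in \eqref{eq:g} depends only on the classes $[A_1],[A_2]\in H^1(C;\mf{g}_P(-Q))$, not on the chosen representatives (changing $A_i$ by $\delbar f$ for $f\in\Omega_C^{0,0}(\mf{g}_P(-Q))$ changes $\delbar_1\inv \rho_1(A_i)$ by $f$, and a Stokes argument parallel to the one below shows the integral is unchanged). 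Symmetry of the formula is immediate from the explicit symmetrization and the symmetry of $\la-,-\ra$.

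The main calculation is to carry out the identification with the algebraic pairing. Writing $u_i := \delbar_1\inv\rho_1(A_i)$, the Leibniz rule gives $\delbar\la u_1,u_2\ra = \la A_1,u_2\ra + \la u_1,A_2\ra$, so the integrand equals $\omega\wed\delbar\la u_1,u_2\ra$. Since $C$ is one-dimensional, $(2,0)$-forms vanish, and $d\omega=0$ on the smooth locus of $\omega$, whence $\omega\wed\delbar\la u_1,u_2\ra = d(\omega\cdot\la u_1,u_2\ra)$ away from $P_1\cup Q$. Applying Stokes' theorem on $C$ minus small discs around $P_1\cup Q$ and shrinking: at each $x\in Q$, the double zero of $\la u_1,u_2\ra$ (each $u_i$ vanishes at $Q$ since $u_i\in \Omega_C^{0,0}(\mf{g}_P(P_1-Q))$) cancels the double pole of $\omega$, so these boundary terms contribute nothing; at each $x\in P_1$, the sections $u_i$ have simple poles, $\omega$ has a simple zero, and the boundary integral produces $2\pi i\cdot\Res_x\bigl(\omega\cdot\la u_1,u_2\ra\bigr)$. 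Since the polar parts of $u_i$ at $P_1$ are precisely the images of $[A_i]$ under the connecting isomorphism $H^1(C;\mf{g}_P(-Q))\cong \bigoplus_{x\in P_1}H^0(C;\mf{g}_P\tens\mc{O}_x(x))$, the resulting residue sum is (up to an overall scalar) exactly the block-diagonal algebraic pairing from the start of the subsection.

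The hard part will be the Stokes/residue bookkeeping of the previous paragraph: one must track signs from orientation conventions and verify the orders of vanishing and poles carefully so that the only surviving boundary contributions come from $P_1$. Nondegeneracy then follows, because on each summand $H^0(C;\mf{g}_P\tens\mc{O}_x(x))\cong\mf{g}$ the residue pairing is the tensor product of the nondegenerate form $\la-,-\ra$ on $\mf{g}$ with the nonzero leading coefficient of $\omega$ at the simple zero $x\in P_1$. Combined with symmetry and algebraicity already in hand, this completes the verification that \eqref{eq:g} is a metric on $\mc{M}$.
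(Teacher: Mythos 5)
Your proposal is correct, but it takes a genuinely different route from the paper's. The paper verifies the three required properties separately: smoothness in $P$ by invoking the algebraic dependence of the Szeg\"o kernel on $P$, gauge invariance by a direct Stokes argument, and nondegeneracy by evaluating $g_P$ on the explicit basis \eqref{eq:concentrated-basis} concentrated near $P_1$ (obtaining $2\pi i\,\delta_{ij}\kappa_{ab}$). You instead prove a single master identity --- that the Dolbeault integral \eqref{eq:g} equals $2\pi i$ times the block-diagonal residue pairing on $\bigoplus_{x\in P_1}H^0(C;\mf{g}_P\tens\mc{O}_x(x))$ defined at the start of the subsection --- and then inherit symmetry, nondegeneracy, and algebraicity (hence smoothness) from that pairing all at once. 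This is a legitimate and arguably more structural argument: it dispenses with the Szeg\"o kernel, it upgrades ``smooth in $P$'' to ``algebraic in $P$'', it makes gauge invariance immediate (if $A=\delbar f$ with $f\in\Omega^{0,0}_C(\mf{g}_P(-Q))$ then $\delbar_1\inv\rho_1(A)=\rho_1(f)$ has vanishing polar part along $P_1$, so the residue pairing is unaffected), and it supplies the comparison between the Dolbeault model and the abstract/\v{C}ech models that the paper states but never explicitly proves. What the paper's route buys in exchange is the explicit matrix of $g$ in the concentrated basis, which is reused later for the signature analysis and the $\mb{CP}^1$ example; note that the paper's nondegeneracy computation is essentially your residue identity specialised to that basis. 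Two small points to tidy: the correct identity is $d\bigl(\omega\,\la u_1,u_2\ra\bigr)=-\,\omega\wed\delbar\la u_1,u_2\ra$ (a sign you flagged but should fix, as it only affects the overall normalisation $\pm2\pi i$), and the polar parts of the $u_i$ along $P_1$ are the images of $[A_i]$ under the \emph{inverse} of the connecting isomorphism $H^0(C;\mf{g}_P\tens\mc{O}_{P_1}(P_1))\xrightarrow{\;\sim\;}H^1(C;\mf{g}_P(-Q))$; also check that the circles around $P_2$ contribute nothing (they do not, since the $u_i$ are regular there and $\omega$ vanishes there).
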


In order to prove this proposition, let me recall that $\delbar_{i,P}\inv$ ($i=1,2$) can be expressed in terms of an integral kernel, the \emph{Szeg\"o kernel $\mc{S}_P$} \cite[Def.\ 15.2]{Costello:2019tri}.

\begin{defn}[Szeg\"o kernel]
The \emph{Szeg\"o kernel for $P\in\mc{M}$} is the unique element $\mc{S}_P\in H^0(C\times C;\mf{g}_P\tens\mf{g}_P\tens\mc{O}(D_1\times C + C\times D_2 + \Delta_C))$ such that the residue of $\mc{S}_P$ along the diagonal is the quadratic Casimir for $\mf{g}$.
\end{defn}

\begin{proof}
In order to prove that $g$ defines a metric on $\mc{M}$ we need to show the following:
\begin{enumerate}
	\item $g_P$ is smooth in $P$.
	\item $g_P$ is gauge invariant, i.e.\ it vanishes if one of the $A_i$ is a coboundary (hence the expression descends to a pairing on cohomology).
	\item The resulting pairing on cohomology is nondegenerate.
\end{enumerate}
First, the Szeg\"o kernel varies algebraically in $P$ \cite[Prop.\ 5.6]{BZB03}, and so the expression $\delbar_{i,P}\inv A$ varies smoothly with $P\in\mc{M}$. Hence we have that the expression for the metric varies smoothly on $\mc{M}$.

For gauge invariance, observe that for $\phi\in\Omega^{0,0}_C(\mf{g}_P(-Q))$ and $A\in\Omega^{0,1}_C(\mf{g}_P(-Q))$ we have that $\omega\la\delbar_1\inv A,\phi\ra$ is a smooth function (the poles of $\delbar_1\inv A$ and $\phi$ cancel with the zeroes of $\omega$, and vice versa). Hence,
\begin{align*}
	g_P(A,\delbar\phi)
		&= \int_C\omega\wed\la\delbar_1\inv A,\delbar\phi\ra
			+\int_C\omega\wed\la A,\phi\ra
		= \int_C\delbar\left(
			\omega\la\delbar_1\inv A,\phi\ra
		\right)
			= \int_{\pd C=\emptyset}\omega\la\delbar_1\inv A,\phi\ra
			=0.
\end{align*}

Finally, for nondegeneracy we will compute the metric in a convenient basis for the tangent space. Set $P_1 = p_1 + \cdots p_n$, with all $p_i$ distinct. Let $\mb{D}_i$ be a coordinate disc around $p_i$ with coordinate $z_i$ chosen such that $\omega|_{\mb{D}_i} = z_i dz_i$, and let $\{t_a\}$ be a basis of $\mf{g}$. Let $\delta_{|z_i|=\epsilon}$ be the distributional $(0,1)$-form defined by
	\[	\int g(z,\bar{z})dz\wed\delta_{|z|=\epsilon}
			= \oint_{|z|=\epsilon}g(z,\bar{z})dz	\]
(or a radially symmetric smooth mollification of said form). Trivialise $\mf{g}_P$ on the discs $\mb{D}_i$, and define
\begin{align}\label{eq:concentrated-basis}
	A_{ia} := \left\{
		\begin{array}{lr}
		\frac{t_a}{z_i}\delta_{|z_i|=\epsilon} & \text{on }\mb{D}_i, \\
		0 & \text{on } C\setminus\mb{D}_i
		\end{array}\right.
\end{align}
Note that $\delbar_1\inv A_{ia} = \frac{t_a}{z_i}\delta_{|z_i|\leq\epsilon}$ (or, again, an appropriately mollified smooth approximation of the step function) so that we have
\begin{align*}
	g_P(A_{ia},A_{jb})
		&= \int_C \omega\wed \la \frac{t_a}{z_i}\delta_{|z_i|\leq\epsilon},\frac{t_b}{z_j}\delta_{|z_j|=\epsilon} \ra
			+
			\int_C \omega\wed \la \frac{t_b}{z_j}\delta_{|z_j|\leq\epsilon},\frac{t_a}{z_i}\delta_{|z_i|=\epsilon} \ra \\
		&= \delta_{ij}\oint_{|z_i|=\epsilon} \frac{dz_i}{z_i}\la t_a,t_b\ra
			= 2\pi i\delta_{ij}\kappa_{ab}
\end{align*}
where $\kappa_{ab}=\la t_a,t_b\ra$. Thus, nondegeneracy of the metric follows from nondegeneracy of the pairing $\la-,-\ra$.
\end{proof}


\subsubsection{\v{C}ech model for the metric}\label{sss:metric-Cech}

Let us now also consider the following algebraic \v{C}ech model of the metric. Consider the \v{C}ech complex calculating the derived pushforward over an affine open $U\subset \mc{M}$, with respect to the open cover $\left\{C\setminus D_1, \coprod \mb{D}_i\right\}$ of $C$:
\[\begin{tikzcd}
	0\ar[r]
		& \check{C}^0(\ad(\mc{P})(-Q))\ar[r,"\rho_i"]\ar[d,"\delta"]
			& \check{C}^0(\ad(\mc{P})(D_i))\ar[d,"\delta"]\ar[r]
				& \bigoplus_{x\in P_1}\ad(\mc{P}|_{x\times U})\tens T_x C \ar[d]\ar[r]
					& 0 \\
	0\ar[r]
		& \check{C}^1(\ad(\mc{P})(-Q))\ar[r,"\rho_i"]
			& \check{C}^1(\ad(\mc{P})(D_i))\ar[u,bend left,"\delta_i\inv"]\ar[r]
				& 0
\end{tikzcd}\]
On this open affine, define the metric by the formula
\[	g(A_1,A_2)
			= \sum_{x\in D_1} \Res_x \left(\omega\la\delta_1\inv\rho_1(A_1),\delta_1\inv\rho_2(A_2)\ra\right).	\]
We would like to show that:
\begin{enumerate}
	\item This expression makes sense.
	\item This expression vanishes if one of the $A_i$ is a coboundary (hence the expression descends to a pairing on cohomology).
	\item The resulting pairing on cohomology is nondegenerate.
\end{enumerate}

First, the expression $\delta_1\inv(A_1)\tens\delta_1\inv(A_2)$ is an element of $\bigoplus_{x\in P_1}\ad(\mc{P}|_{x\times U})^{\tens2}\tens T_x C^{\tens2}$. Fix a point $x\in C$. Then in taking global sections of the exact sequence
	\[	0\to K_C(-2x) \to K_C(-x)\to \mc{O}_x\tens T^\ast_x C^{\tens 2} \to 0	\]
one sees that $\omega\in \Gamma(\mc{O}_C(-D_1-D_2))\subset \Gamma(\mc{O}_C(-D_1))$ determines an element $\omega_{P_1}\in \bigoplus_{x\in P_1} T^\ast_x C^{\tens 2}$. Since $\omega$ has strictly first order zeroes at all the points of $P_1$, $\omega_{P_1}$ is nonvanishing. Thus it trivialises $T_xC^{\tens 2}$, and can be contracted with $\delta_1\inv(A_1)\tens\delta_1\inv(A_2)$ to obtain an element of $\bigoplus_{x\in P_1}\ad(\mc{P}|_{x\times U})^{\tens2}$. $g(A_1,A_2)$ is then given by applying $\la-,-\ra$ and summing over the points of $P_1$ to obtain a function on $U$.

This description of the metric makes clear that once again nondegeneracy on cohomology amounts to the nondegeneracy of the invariant pairing $\la-,-\ra$. It therefore remains to show that the expression vanishes if one of the $A_i$ is coboundary. But by exactness of the top row in the above diagram, if $A_i$ is coboundary in $\check{C}^1(\ad(\mc{P})(-Q))$ it projects to zero in $\bigoplus_{x\in P_1}\ad(\mc{P}|_{x\times U})\tens T_x C$.


\subsection{Properties of the metric}\label{ss:metric-prop}

Let us now take a moment to explore some of the properties of this metric. In particular, I am interested in obtaining an expression for the first derivative of the metric in local coordinates, and an understanding of the (pseudo-)Riemannian metrics induced on real slices of $\mc{M}$.


\subsubsection{First derivative of the metric}\label{sss:metric-1deriv}

Let $\{A_{ia}\}$ be a collection of $\mf{g}_P(-Q)$-valued $(0,1)$-forms which descend to give a basis of $H^1(C;\mf{g}_P(-Q))$. (For concreteness, the reader may wish to consider the basis given by \eqref{eq:concentrated-basis}.) Then for small enough $\vec\lambda = (\lambda^{ia})$ the expression
	\[	\delbar_{P+\vec\lambda}:=\delbar_P + \lambda^{ia}A_{ia}	\]
defines a gauge-inequivalent $\delbar$-operator on the smooth bundle underlying $P$, hence $(\lambda^{ia})$ provides a system of coordinates in a neighbourhood of $P$.

Now, we can express $\delbar_{P+\vec\lambda}\inv$ as a geometric series
\begin{align*}
	\delbar_{P+\vec\lambda}\inv
			&= (\delbar_P + \lambda^{ia}[A_{ia},-])\inv
			= (1 + \lambda^{ia}\delbar_P\inv[A_{ia},-])\inv \delbar_P\inv \\
			&= \delbar_P\inv + \sum_{N\geq 1}(-1)^N \lambda^{i_1a_1}\cdots\lambda^{i_Na_N}\delbar_P\inv [A_{i_1a_1},-]\delbar_P\inv\cdots[A_{i_Na_N},-]\delbar_P\inv
\end{align*}
so that the metric components are locally of the form
\begin{align*}
	g_{ia,jb}(\vec\lambda) = g_{P+\vec\lambda}\left(\frac{\pd}{\pd\lambda^{ia}},\frac{\pd}{\pd\lambda^{jb}}\right)
		&= \int_C\omega\wed\la\delbar_{P+\vec\lambda,1}\inv A_{ia},A_{jb}\ra
			+\int_C\omega\wed\la\delbar_{P+\vec\lambda,1}\inv A_{jb},A_{ia}\ra \\
		&= g_P(A_{ia},A_{jb}) \\
		&\qquad	-\lambda^{kc}\int_C\omega\wed\la\delbar_{P,1}\inv [A_{kc},\delbar_{P,1}\inv A_{ia}],A_{jb}\ra \\
		&\qquad	-\lambda^{kc}\int_C\omega\wed\la\delbar_{P,1}\inv [A_{kc},\delbar_{P,1}\inv A_{jb}],A_{ia}\ra
			+ O(\lambda^2).
\end{align*}
So we have:
\begin{prop}\label{p:g-deriv}
The derivative of $g$ at $P$ is given in local coordinates by
	\[	\left.\frac{\pd g_{ia,jb}}{\pd\lambda^{kc}}\right|_{\vec\lambda=0}
			= -\int_C\omega\wed\la\delbar_{P,1}\inv [A_{kc},\delbar_{P,1}\inv A_{ia}],A_{jb}\ra
				-\int_C\omega\wed\la\delbar_{P,1}\inv [A_{kc},\delbar_{P,1}\inv A_{jb}],A_{ia}\ra .	\]
\end{prop}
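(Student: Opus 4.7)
The proof is essentially a bookkeeping exercise: the derivation is already carried out in the paragraphs preceding the statement, and my task is to justify the formal manipulations. I would organise the argument in three steps: verify the coordinate-chart identity $\delbar_{P+\vec\lambda} = \delbar_P + \lambda^{ia}A_{ia}$, establish the geometric series for $\delbar_{P+\vec\lambda,1}\inv$, and then read off the $\lambda^{kc}$-linear term.

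The first step is essentially definitional. By construction the classes $[A_{ia}]$ form a basis of $H^1(C;\mf{g}_P(-Q)) \cong T_P\mc{M}$, so for small $\vec\lambda$ adding $\lambda^{ia}A_{ia}$ to $\delbar_P$ defines a transverse slice to the gauge orbit through $\delbar_P$ on the smooth bundle underlying $P$; this is the meaning of the coordinates $\vec\lambda$ in a neighborhood of $P$.

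For the geometric series, the key input is already in hand: in the proof of Proposition \ref{p:g-metric} the operator $\delbar_{P,1}\inv$ was built from the Szeg\"o kernel $\mc{S}_P$ and shown to depend smoothly on $P$. The perturbation $T_{\vec\lambda} := \lambda^{ia}\delbar_{P,1}\inv[A_{ia},-]$ is therefore continuous in $\vec\lambda$ and vanishes at $\vec\lambda=0$, so $(1+T_{\vec\lambda})$ is invertible with convergent Neumann series for $\vec\lambda$ in a neighborhood of $0$. Multiplying on the right by $\delbar_{P,1}\inv$ gives the expansion for $\delbar_{P+\vec\lambda,1}\inv$ displayed before the proposition.

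Substituting this expansion into \eqref{eq:g} produces a Taylor expansion of $g_{ia,jb}(\vec\lambda)$ whose constant term is $g_P(A_{ia},A_{jb})$ and whose $\lambda^{kc}$-linear coefficient is exactly the pair of terms stated in the proposition (one from each of the two summands in the definition of $g$). The main (mild) obstacle is checking that one may differentiate under the integral sign; this reduces to the observation that each integrand in the series is a smooth top-degree form on $C$, because the simple zeros of $\omega$ at $P_1$ absorb the allowed simple poles of the iterated $\delbar_{P,1}\inv$, while the vanishing of the $A_{ia}$ at $Q$ absorbs the double poles of $\omega$ at $Q$, just as in the gauge-invariance computation of Proposition \ref{p:g-metric}. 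With this cancellation in place, smooth dependence on $\vec\lambda$ of the integrand justifies differentiation under the integral, and the stated formula follows.
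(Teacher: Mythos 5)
Your proposal is correct and follows essentially the same route as the paper: the paper defines the coordinates via $\delbar_{P+\vec\lambda}=\delbar_P+\lambda^{ia}A_{ia}$, expands $\delbar_{P+\vec\lambda,1}\inv$ as the same geometric (Neumann) series, substitutes into \eqref{eq:g}, and reads off the $\lambda^{kc}$-linear term. Your additional remarks on convergence of the Neumann series and on differentiating under the integral (via the pole--zero cancellation already used for gauge invariance) are sound elaborations of steps the paper leaves implicit.
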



\subsubsection{The metric on real slices}\label{sss:metric-real}

The physical system from which this project takes inspiration must of course have as its target space some real pseudo-Riemannian manifold, necessitating the choice of a real slice of our moduli space $\mc{M}$. With this in mind, let us explore the induced pseudo-Riemannian metric on slices induced by a real structure on the curve $C$ and real form of $G$.

Let $\rho_C:C\to C$ and $\rho_G:G\to G$ be antiholomorphic involutions, where $\rho_G$ is also a homomorphism defining a real form of the group. We further require that $\rho_C^\ast(\omega)=\overline{\omega}$. This gives a real structure $\rho$ on $\mc{M}$ defined by taking a bundle $P$ with transition functions $(\varphi_{ij})$ on some open cover to the bundle with transition functions $(\rho_G\circ\varphi_{ij}\circ\rho_C)$. Fixed points of this action are bundles with transition functions $(\varphi_{ij})$ such that there exists a cochain $(s_i)$ such that $s_i\varphi_{ij}s_j\inv = \rho_G\circ\varphi_{ij}\circ\rho_C$.

Take the real points $\mc{M}(\mb{R})$ with respect to this structure, and define a pseudo-Riemannian metric by taking $g_{\mb{R}}=\frac{1}{2\pi i}g$. We wish to understand the possible signatures of $g_{\mb{R}}$.

We work in a basis modelled on \eqref{eq:concentrated-basis}. Assume that $\{t_a\}$ is a basis for the real form defined by $\rho_G$. Then
	\[	\rho_C^\ast \overline{A_{ia}} = \frac{t_a}{\rho_C(\overline{z_i})}\delta_{|z_i|=\epsilon}.	\]
Now, the coordinates $z_i$ are such that $\omega=z_idz_i$, and $\rho_C^\ast(\omega)=\rho_C(z_i)d\rho_C(z_i)=\overline{z_i}d\overline{z_i}$. This restricts the form of $\rho_C$ in these coordinates to be $\rho_C(z_i)=\lambda_i\overline{z_i}$, where $\lambda_i=\pm1$. If $\lambda_i=+1$ then $A_{ia}$ is a tangent vector to $\mc{M}(\mb{R})$; if $\lambda_i = -1$ then $iA_{ia}$ is a tangent vector to $\mc{M}(\mb{R})$. Together with the fact that $\frac{1}{2\pi i}g(A_{ia},A_{jb})=\delta_{ij}\kappa_{ab}$ we have shown the following:

\begin{prop}\label{p:possible-sigs}
Let $\Sigma(\kappa)$ denote the signature of the form $\la-,-\ra$ restricted to the real form of $\mf{g}$ defined by $\rho_G$. Then the signature of $g_{\mb{R}}$ is $(\lambda_1\Sigma(\kappa),\ldots,\lambda_{g-1}\Sigma(\kappa))$.
\end{prop}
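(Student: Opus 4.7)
The plan is to evaluate $g_{\mb{R}}$ on an explicit real basis of $T_P\mc{M}(\mb{R})$ built from the concentrated basis $\{A_{ia}\}$ of \eqref{eq:concentrated-basis}. From the proof of Proposition~\ref{p:g-metric} I already have $g_P(A_{ia}, A_{jb}) = 2\pi i\, \delta_{ij}\kappa_{ab}$, so on the complexified tangent space $g_{\mb{R}} = \frac{1}{2\pi i} g$ takes the diagonal form $\delta_{ij}\kappa_{ab}$; the remaining task is to identify the real subspace and restrict.

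To produce a real basis I would exploit the local analysis already in place: under the normalisation $\omega = z_i\, dz_i$ and the condition $\rho_C^\ast \omega = \overline{\omega}$, the involution must satisfy $\rho_C(z_i) = \lambda_i \overline{z_i}$ with $\lambda_i \in \{\pm 1\}$. Combined with $\{t_a\}$ being a real basis of $\mf{g}$ under $\rho_G$, the author's computation of $\rho_C^\ast\overline{A_{ia}}$ shows that the natural real involution on $\Omega^{0,1}_C(\mf{g}_P(-Q))$ sends $A_{ia} \mapsto \lambda_i A_{ia}$. Accordingly I set $e_{ia} := A_{ia}$ when $\lambda_i = +1$ and $e_{ia} := i A_{ia}$ when $\lambda_i = -1$; these span $T_P\mc{M}(\mb{R})$ over $\mb{R}$.

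By $\mb{C}$-bilinearity, substituting these vectors into the diagonal expression for $g_{\mb{R}}$ gives
\[
    g_{\mb{R}}(e_{ia}, e_{jb}) = \lambda_i\, \delta_{ij}\,\kappa_{ab},
\]
since each factor of $i$ contributes $i^2 = -1$ exactly when $\lambda_i = -1$, and cross terms with differing $\lambda_i, \lambda_j$ drop out automatically because the form is already $\delta_{ij}$-diagonal. The real metric is therefore block-diagonal with the $i$-th block equal to $\lambda_i \kappa_{ab}$ restricted to the real form of $\mf{g}$; that block has signature $\lambda_i \Sigma(\kappa)$, and summing over the blocks produces the claimed signature.

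The main obstacle I anticipate is the bookkeeping around real structures: checking that the concentrated basis $\{A_{ia}\}$ really descends to tangent vectors at a fixed point of $\rho$, and that the trivialisations of $\mf{g}_P$ on the coordinate discs can be chosen compatibly with $\rho_C$ and $\rho_G$ so that the action on the basis cleanly reduces to the scalar $\lambda_i$. Once this compatibility is in place the remainder is diagonal linear algebra.
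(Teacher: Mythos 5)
Your proposal is correct and follows essentially the same route as the paper: the author likewise evaluates $g_{\mb{R}}$ on the concentrated basis, deduces $\rho_C(z_i)=\lambda_i\overline{z_i}$ from $\rho_C^\ast\omega=\overline{\omega}$, and takes $A_{ia}$ or $iA_{ia}$ as real tangent vectors according to the sign of $\lambda_i$, giving the block-diagonal form $\lambda_i\delta_{ij}\kappa_{ab}$. The compatibility bookkeeping you flag at the end is indeed left implicit in the paper as well.
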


The following example demonstrates that the sequence $(\lambda_1,\ldots,\lambda_{g-1})$ can in fact take any value in $\{\pm1\}^{g-1}$.

\begin{eg}
For this example, we make use of the fact that a Riemann surface equipped with a nonzero holomorphic 1-form is equivalent to a \emph{translation surface}. An accessible and interesting introduction to translation surfaces is given by \cite{Wr15}. For the purposes of this paper, however, we simply need to know the following facts about the surface $C$ pictured in Figure \ref{fig:hexagons}:
\begin{itemize}
	\item The shaded region, considered as sitting in the complex plane, corresponds to the surface.
	\item The surface is obtained by identifying opposite edges of the rectangle and opposite edges of each hexagon.
	\item If there are $g-1$ hexagons, the resulting surface is of genus $g$.
	\item The surface $C$ is equipped with the holomorphic 1-form $\omega$ induced by the 1-form $dz$ on $\mb{C}$.
	\item The corners of the hexagons correspond to simple zeros of $\omega$.
	\item The surface $C$ inherits a real structure $\rho$ from complex conjugation on $\mb{C}$.
	\end{itemize}
\begin{figure}[htp]
	\centering
	\includegraphics[width=6in]{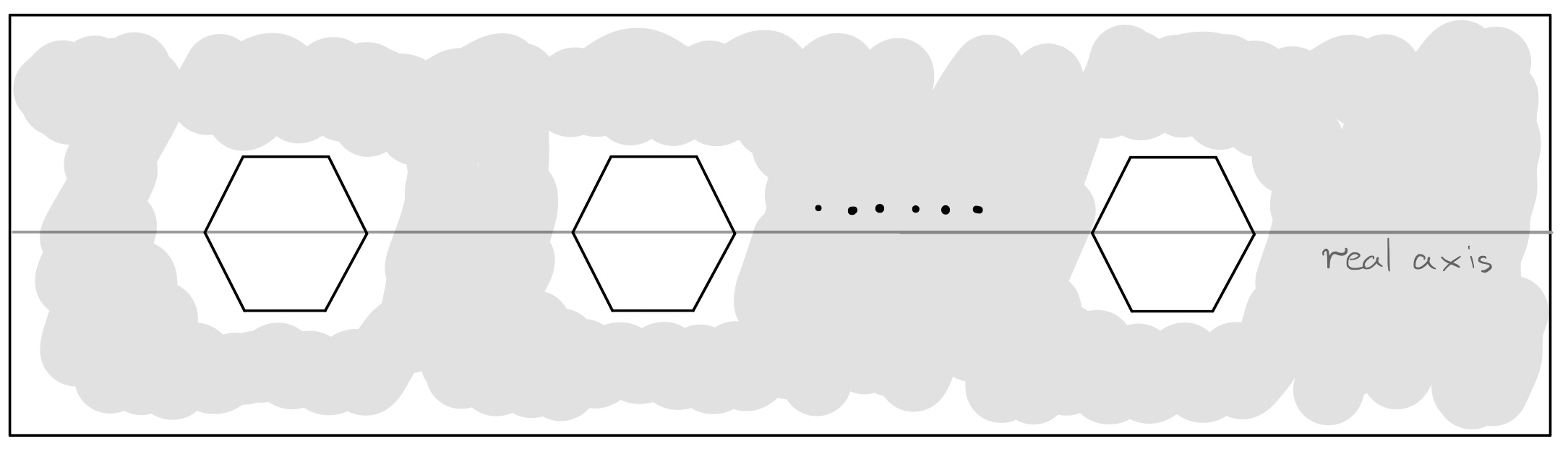}
	\caption{A genus $g$ translation surface with complex conjugation symmetry. Note there are $g-1$ hexagons.}
	\label{fig:hexagons}
	\end{figure}

In order to define our metric, we need to choose $g-1$ of the zeroes of $\omega$, assumed to lie on the real axis, to be the divisor $D_1$. Let us suppose that from each hexagon we choose either the leftmost corner or the rightmost corner. We wish to know how $\rho$ acts on coordinate $w$ centred at one of these corners.

Near one of these corners, the local coordinate satisfies $wdw = dz$, i.e.\ $w$ is proportional to a square-root of $z$. To make sense of this, we need to choose a branch cut for the square-root. Let $z=r e^{i\theta}$.

Suppose that we choose the rightmost corner, so that a local model for the corner looks like Figure \ref{fig:negbc}. Then we take our branch cut along the negative real axis, so that
	\[	w = 2\sqrt{r}e^{i\theta/2}, \quad -\pi <\theta < \pi.	\]
Then $\rho$ acts on $w$ by taking $\theta\mapsto -\theta$, so that $\rho(w) = \bar{w}$. Hence in this case, $\lambda_i = +1$.

\begin{figure}[htp]
	\centering
	\includegraphics[width=2in]{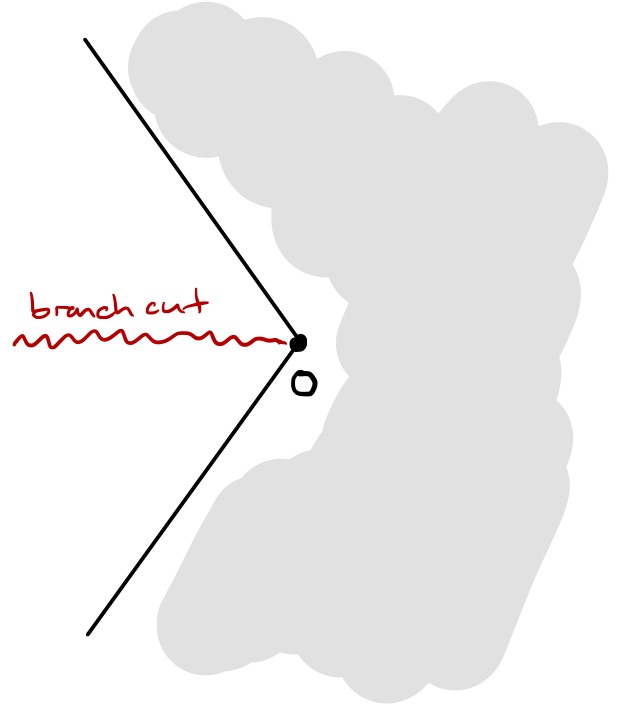}
	\caption{Taking the branch cut for the square-root along the negative real axis.}
	\label{fig:negbc}
	\end{figure}

Now suppose instead that we choose the leftmost corner, so that a local model for the corner looks like Figure \ref{fig:posbc}. Then we take our branch cut along the positive real axis, so that
	\[	w = 2\sqrt{r}e^{i\theta/2}, \quad 0<\theta<2\pi.	\]
Now $\rho$ acts on $w$ by taking $\theta\mapsto 2\pi - \theta$, so that $\rho(w) = -\bar{w}$. Hence in this case, $\lambda_i = -1$.

\begin{figure}[htp]
	\centering
	\includegraphics[width=2in]{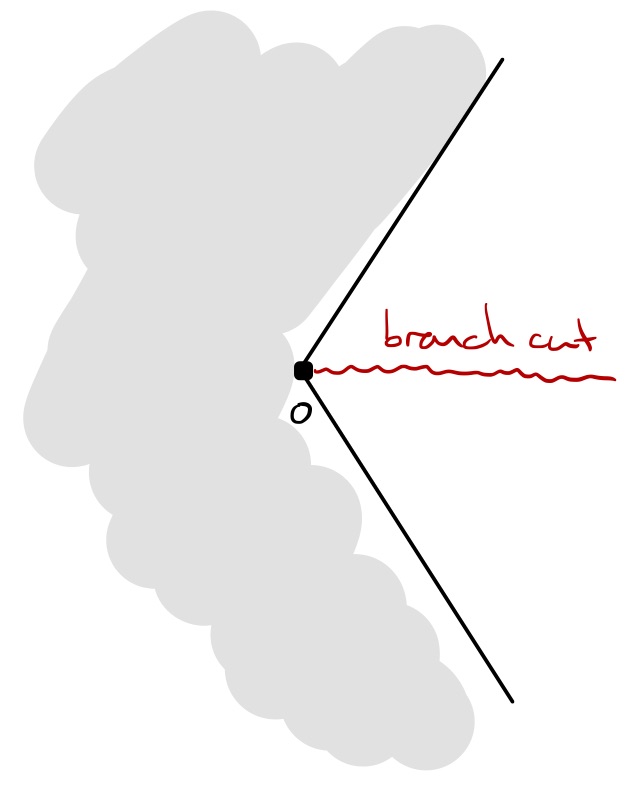}
	\caption{Taking the branch cut for the square-root along the positive real axis.}
	\label{fig:posbc}
	\end{figure}

Thus, by choosing left/rightmost points of the hexagons appropriately, any sequence of $\pm1$ of length $g-1$ may be achieved.
\end{eg}


%

%


\section{Definition of the 3-form}\label{s:3form-def}

Next, let us define the 3-form that will appear in the action of our $\sigma$-model. I will define this expression on the Dolbeault complex and then show that it is gauge invariant.

Recall that there is a totally antisymmetric invariant 3-tensor $\la[-,-],-\ra \in \left(\bigwedge^3\mf{g}^\ast\right)^G$. We define the 3-form $\Omega$ on $\mc{M}$ by the formula
\begin{align}\label{eq:3form}
	A_1,A_2,A_3
		&\mapsto
			\sum_{\sigma\in S_3}(-1)^\sigma
			\int_C \omega\wed\la
				[A_{\sigma(1)},\delbar_1\inv(A_{\sigma(2)})],\delbar_2\inv(A_{\sigma(3)})
			\ra,
			& A_1,A_2,A_3\in\Omega_C^{0,1}(\mf{g}_P(-Q)).
\end{align}


\begin{prop}\label{p:3-form}
Equation \eqref{eq:3form} defines a closed 3-form on $\mc{M}$.
\end{prop}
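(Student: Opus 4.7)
The plan is to establish three things: that the integrand of \eqref{eq:3form} is well-defined (smoothness on $C$), that the expression descends from $\Omega^{0,1}_C(\mf{g}_P(-Q))^{\tens 3}$ to $H^1(C;\mf{g}_P(-Q))^{\tens 3} = (T_P\mc{M})^{\tens 3}$, and that the resulting 3-form is closed. Total antisymmetry in the three inputs is built into the formula via the signed $S_3$-sum.

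For smoothness and descent I mirror the arguments in Proposition \ref{p:g-metric}. The cohomology vanishing assumption makes $\delbar_i$ an isomorphism, so $\delbar_i\inv$ is well-defined and lands in $\Omega^{0,0}_C(\mf{g}_P(P_i-Q))$. An order-of-pole/zero accounting at $P_1$, $P_2$, and $Q$---simple poles of $\delbar_i\inv$ absorbed by the simple zeros of $\omega$ at $P_i$, and the double pole of $\omega$ at $Q$ absorbed by the combined zeros contributed by the $A$-factor (which lies in $\mf{g}_P(-Q)$) and by the zero of the $\delbar_i\inv$-factor at $Q$---shows that each summand is a globally smooth top form. For gauge invariance I substitute $A_1 = \delbar\phi$ with $\phi\in\Omega^{0,0}_C(\mf{g}_P(-Q))$ and split the six permutations according to the position of $A_1$: in the four terms where $A_1$ sits inside a $\delbar_i\inv$ I use $\delbar_i\inv\delbar\phi = \phi$ (injectivity of $\delbar_i$); in the two terms where $A_1$ sits outside I integrate by parts twice (using $\delbar\omega = 0$ and graded Leibniz, with no boundary contributions since the products are globally smooth). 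The six resulting terms then match in pairs via the cyclic identity $\la[a,b],c\ra = \la[b,c],a\ra$ coming from invariance of $\la-,-\ra$ and antisymmetry of $[-,-]$.

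For closedness I work in the local coordinates $\{\lambda^\alpha\}$ of Section \ref{sss:metric-1deriv}, so that the coordinate vector fields $\pd_\alpha$ commute and
\[
(d\Omega)_{\alpha\beta\gamma\delta}(0)
	= \pd_\alpha\Omega_{\beta\gamma\delta}(0) - \pd_\beta\Omega_{\alpha\gamma\delta}(0)
		+ \pd_\gamma\Omega_{\alpha\beta\delta}(0) - \pd_\delta\Omega_{\alpha\beta\gamma}(0).
\]
Differentiating the geometric series for $\delbar_{P+\vec\lambda,i}\inv$ exactly as in the proof of Proposition \ref{p:g-deriv} produces integrals with a nested $\delbar_i\inv$-insertion and one additional Lie bracket in either the $\delbar_1\inv$- or the $\delbar_2\inv$-slot. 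After antisymmetrizing over the four indices, I expect the Jacobi identity for $[-,-]$ on $\mf{g}$, the total antisymmetry of $\la[-,-],-\ra \in (\wedge^3\mf{g}^\ast)^G$, and integration by parts relating $\delbar_1\inv$ and $\delbar_2\inv$ (along the lines of the gauge-invariance step) to conspire to cancellation. The main obstacle is the combinatorial bookkeeping of the permutation signs paired with the asymmetric appearance of $\delbar_1\inv$ and $\delbar_2\inv$; a cleaner conceptual proof would exhibit $\Omega$ as arising from a manifestly closed object on $\mc{M}$ (for instance, the variation of a Chern-Simons-like functional), but absent such an interpretation I would proceed with the direct calculation.
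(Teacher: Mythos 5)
Your treatment of well-definedness and of descent to cohomology is sound and essentially reproduces the paper's argument: the pole/zero bookkeeping, the substitution $A_1=\delbar\phi$ with $\delbar_i\inv\delbar\phi=\phi$, integration by parts against the holomorphic $\omega$, and matching of terms under antisymmetrisation using invariance of $\la-,-\ra$ is exactly the mechanism used there. The problem is the closedness step, which as written is a plan rather than a proof: ``I expect \dots to conspire to cancellation'' leaves precisely the part of the proposition that requires an argument unproved, and the cancellation mechanism you anticipate (the Jacobi identity on $\mf{g}$) is not in fact what makes the computation close.

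The actual mechanism is simpler and worth knowing. Differentiating the geometric series for $\delbar_{\hat A,i}\inv$ produces, as the term linear in the fourth direction $\alpha_4$, a sum of two integrals each carrying one nested insertion $\delbar_{i}\inv([\alpha_4,\delbar_{i}\inv(-)])$. Using the adjointness relation
\[
\int_C\omega\wed\la\delbar_1\inv A,B\ra=-\int_C\omega\wed\la A,\delbar_2\inv B\ra
\]
(Lemma \ref{l:adjoints}, itself an instance of your integration-by-parts step) together with one further application of $\int_C\delbar(\text{holomorphic section})=0$, this linear term can be rewritten as
\[
\int_C\omega\wed\la
\delbar_{1}\inv([\alpha_4,\delbar_{1}\inv\alpha_2]),[\alpha_1,\delbar_{2}\inv\alpha_3]\ra
+\int_C\omega\wed\la
\delbar_{1}\inv([\alpha_1,\delbar_{1}\inv\alpha_2]),[\alpha_4,\delbar_{2}\inv\alpha_3]\ra,
\]
which is manifestly \emph{symmetric} under exchanging $\alpha_1\leftrightarrow\alpha_4$. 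Since $S_4$ decomposes into cosets of the subgroup generated by the transposition $(14)$, and the two elements of each coset carry opposite signs, the signed sum over $S_4$ annihilates any $(14)$-symmetric expression. No Jacobi identity and no four-index combinatorics beyond this single symmetry are required. To complete your proof you must either carry out this computation or supply the ``manifestly closed object on $\mc{M}$'' you allude to; as it stands the closedness claim is unsubstantiated.
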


\begin{proof}
Smoothness follows as for the metric by the expression for $\delbar_P\inv$ in terms of the Szeg\"o kernel. Next, we would like to see that \eqref{eq:3form} vanishes if one of the $A_i$ is a coboundary. Consider the expression
	\[	\omega\la
			[\delbar_1\inv(A_1),\delbar_1\inv(A_2)],\delbar_2\inv(A_3)
		\ra.	\]
Set $A_1=\delbar\phi$, $A_2=A$,$A_3=B$. Then the expression $\omega\la[\phi,\delbar_1\inv A],\delbar_2\inv B\ra$ is holomorphic, and so we have
\begin{align*}
	0 	&= \int_C\delbar\left(
				\omega\la[\phi,\delbar_1\inv A],\delbar_2\inv B\ra
			\right) \\
		&= -\int_C\omega\wed\la[\delbar\phi,\delbar_1\inv A],\delbar_2\inv B\ra
			-\int_C\omega\wed\la[\phi,A],\delbar_2\inv B\ra
			-\int_C\omega\wed\la[\phi,\delbar_1\inv A],B\ra
\end{align*}
so that (antisymmetrising the $A\leftrightarrow\phi$ terms)
\begin{align*}
	\int_C\omega\wed\la[\delbar\phi,\delbar_1\inv A],\delbar_2\inv B\ra
		- \int_C\omega\wed\la[A,\phi],\delbar_2\inv B\ra
		= -\int_C\omega\wed\la[\phi,\delbar_1\inv A],B\ra.
\end{align*}
Antisymmetrising this expression in the $A\leftrightarrow B$ terms gives
\begin{align*}
	\int_C\omega\wed\la[\delbar\phi,\delbar_1\inv A],\delbar_2\inv B\ra
		&- \int_C\omega\wed[A,\phi],\delbar_2\inv B\ra
		-\int_C\omega\wed\la[\delbar\phi,\delbar_1\inv B],\delbar_2\inv A\ra
		+ \int_C\omega\wed[B,\phi],\delbar_2\inv A\ra \\
	&= \int_C\omega\wed\la[\phi,\delbar_1\inv B],A\ra
		-\int_C\omega\wed\la[\phi,\delbar_1\inv A],B\ra \\
	&= \int_C\omega\wed\la[B,\delbar_1\inv A],\phi\ra
		-\int_C\omega\wed\la[A,\delbar_1\inv B],\phi\ra
\end{align*}
which is precisely the statement that the full antisymmetrisation of the original expression vanishes. Thus the expression is gauge invariant, and so descends to give a well-defined 3-form on $\mc{M}$.

Next, we would like to show that this 3-form is closed. Given a point $P\in\mc{M}$, identify a small neighbourhood $U$ of $P$ in $\mc{M}$ with a small neighbourhood of $0\in H^1(C;\mf{g}_P(-Q))$ by sending the harmonic representatives\footnote{Or any other choice of basis, as per our calculation of the derivative of the metric.} $\hat{A}\in\Omega^{0,1}(C;\mf{g}_P(-Q))$ for elements $A\in H^1(C;\mf{g}_P(-Q))$ to the $G$-bundles with $\delbar$-operator $\delbar_{\hat{A}}=\delbar_P+\hat{A}$. Over $U$, via this isomorphism, the tangent bundle is trivialised $TU\cong U\times H^1(C;\mf{g}_P(-Q))$ and the 3-form sends $\alpha_1,\alpha_2,\alpha_3\in H^1(C;\mf{g}_P(-Q))$ to the function on $U$ given by the antisymmetrization of
	\[	\int_C\omega\wed\la
		[\alpha_1,\delbar_{\hat{A},1}\inv(\alpha_2)],
		\delbar_{\hat{A},2}\inv(\alpha_3)
	\ra	\]
To calculate the exterior derivative of this function, we wish to antisymmetrize the terms which are $\alpha_4$-linear in the expression
\begin{align*}
	\int_C&\omega\wed\la
		[\alpha_1,\delbar_{\hat{A}+\alpha_4,1}\inv\alpha_2]\ra,
		\delbar_{\hat{A}+\alpha_4,2}\inv\alpha_3\ra
		-
		\int_C\omega\wed\la
		[\alpha_1,\delbar_{\hat{A},1}\inv\alpha_2],
		\delbar_{\hat{A},2}\inv\alpha_3\ra \\
		&=
		-\int_C\omega\wed\la
			\left[
				\alpha_1,
				\delbar_{\hat{A},1}\inv([\alpha_4,\delbar_{\hat{A},1}\inv\alpha_2])
			\right],
			\delbar_{\hat{A},2}\inv\alpha_3
		\ra
		-
		\int_C\omega\wed\la
			[\alpha_1,\delbar_{\hat{A},1}\inv\alpha_2],
			\delbar_{\hat{A},2}\inv([\alpha_4,\delbar_{\hat{A},2}\inv\alpha_3])
		\ra
		+ \text{ higher order terms} \\
		&=
		\int_C\omega\wed\la
				\delbar_{\hat{A},1}\inv([\alpha_4,\delbar_{\hat{A},1}\inv\alpha_2]),
			[\alpha_1,\delbar_{\hat{A},2}\inv\alpha_3]
		\ra
		-
		\int_C\omega\wed\la
			\delbar_{\hat{A},2}\inv([\alpha_4,\delbar_{\hat{A},2}\inv\alpha_3]),
			[\alpha_1,\delbar_{\hat{A},1}\inv\alpha_2]
		\ra
		+ \text{ h.o.t.}
\end{align*}
where we have used adjointness of $\delbar_1\inv$ and $-\delbar_2\inv$ (Lemma \ref{l:adjoints} below).

$\omega$ is a holomorphic section of $K_C(-D_1-D_2)$, so we have that
\begin{align*}
	\delbar &\left(\omega\tens\la
				\delbar_{\hat{A},1}\inv([\alpha_4,\delbar_{\hat{A},1}\inv\alpha_2]),
			\delbar_{\hat{A},2}([\alpha_1,\delbar_{\hat{A},2}\inv\alpha_3])
		\ra
		\right) \\
		&= -\omega\wed\la
				[\alpha_4,\delbar_{\hat{A},1}\inv\alpha_2],
			\delbar_{\hat{A},2}([\alpha_1,\delbar_{\hat{A},2}\inv\alpha_3])
		\ra
		-\omega\wed\la
				\delbar_{\hat{A},1}\inv([\alpha_4,\delbar_{\hat{A},1}\inv\alpha_2]),
			[\alpha_1,\delbar_{\hat{A},2}\inv\alpha_3]
		\ra \\
		&= -\omega\wed\la
			\delbar_{\hat{A},2}([\alpha_1,\delbar_{\hat{A},2}\inv\alpha_3]),
			[\alpha_4,\delbar_{\hat{A},1}\inv\alpha_2]
		\ra
		-\omega\wed\la
				\delbar_{\hat{A},1}\inv([\alpha_4,\delbar_{\hat{A},1}\inv\alpha_2]),
			[\alpha_1,\delbar_{\hat{A},2}\inv\alpha_3]
		\ra
\end{align*}

So the term linear in $\alpha_4$ becomes
	\[	\int_C\omega\wed\la
				\delbar_{\hat{A},1}\inv([\alpha_4,\delbar_{\hat{A},1}\inv\alpha_2]),
			[\alpha_1,\delbar_{\hat{A},2}\inv\alpha_3]
		\ra
		+
		\int_C\omega\wed\la
				\delbar_{\hat{A},1}\inv([\alpha_1,\delbar_{\hat{A},1}\inv\alpha_2]),
			[\alpha_4,\delbar_{\hat{A},2}\inv\alpha_3]
		\ra	\]
which is symmetric under the exchange of 1 and 4. Since the symmetric group $S_4$ can be written as a disjoint union $S_4 = C\coprod C\cdot (14)$ this expression vanishes upon antisymmetrising. Hence, the 3-form is closed.
\end{proof}

\begin{lemma}\label{l:adjoints}
For $A,B\in\Omega^{0,1}_C(\mf{g}_P(-Q))$, $\int_C\omega\wed\la\delbar_1\inv A, B\ra = -\int_C\omega\wed\la A,\delbar_2\inv B\ra$.
\end{lemma}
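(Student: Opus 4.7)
The plan is to realize this as an integration by parts identity: namely, if we set $\phi := \bar\partial_1^{-1} A \in \Omega^{0,0}_C(\mf{g}_P(D_1))$ and $\psi := \bar\partial_2^{-1} B \in \Omega^{0,0}_C(\mf{g}_P(D_2))$, then the form $\omega \cdot \la \phi, \psi \ra$ should be a globally smooth $(1,0)$-form on $C$, and Stokes' theorem applied to its $\bar\partial$ will deliver the identity.

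First I would verify that $\omega \la \phi, \psi \ra$ is indeed a globally smooth section of $K_C$. This is just divisor bookkeeping: $\phi$ can have at worst simple poles at $P_1$ and vanishes at $Q$, while $\psi$ can have at worst simple poles at $P_2$ and vanishes at $Q$, so $\la \phi, \psi \ra$ has at worst simple poles at $P_1 \cup P_2$ and a double zero at $Q$. Since $\omega$ is a section of $K_C(-D_1-D_2) = K_C \otimes \mc{O}(-P_1-P_2+2Q)$, the tensor product $\omega \la \phi, \psi\ra$ is a smooth section of $K_C$.

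Next I would apply $\bar\partial$. Because $\omega$ is holomorphic and $\omega \la \phi, \psi \ra$ is a $(1,0)$-form on a Riemann surface, we have $d(\omega \la \phi,\psi\ra) = \bar\partial(\omega \la\phi,\psi\ra) = \omega \wed \bar\partial\la\phi,\psi\ra$; the Leibniz rule together with $\bar\partial \phi = \rho_1(A)$ and $\bar\partial \psi = \rho_2(B)$ gives
\[
	\bar\partial(\omega\la\phi,\psi\ra) = \omega\wed\la A,\psi\ra + \omega\wed\la\phi,B\ra,
\]
where symmetry of $\la-,-\ra$ (with the appropriate form-degree signs, all trivial here since $\phi,\psi$ are $(0,0)$-forms) has been used. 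Integrating over $C$ and invoking Stokes' theorem with $\pd C = \emptyset$ gives
\[
	0 = \int_C \omega\wed\la\bar\partial_1\inv A, B\ra + \int_C \omega\wed\la A, \bar\partial_2\inv B\ra,
\]
which is precisely the claim.

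The only conceptual point is the divisor check that makes the integration by parts legitimate without boundary terms; once that is in hand, the rest is essentially the standard proof that $\bar\partial^*$ is minus $\bar\partial$ up to a twist, adapted to this weighted pairing. No hard obstacle is anticipated.
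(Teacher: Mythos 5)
Your proof is correct and is essentially identical to the paper's: the paper likewise observes that $\omega\la\delbar_1\inv A,\delbar_2\inv B\ra$ is a pole-free $(1,0)$-form (the poles at $P_1\cup P_2$ cancelling against the zeroes of $\omega$) and then applies $\delbar$ and Stokes' theorem on the closed curve $C$. Your explicit divisor bookkeeping just spells out what the paper leaves implicit, and the harmless sign-ordering difference in $\delbar(\omega\la\phi,\psi\ra)$ washes out since the integral vanishes.
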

\begin{proof}
Observe that $\omega\la\delbar_1\inv A,\delbar_2\inv B\ra$ is holomorphic. So
	\[	0 = -\int_C\delbar\left(
		\omega\la\delbar_1\inv A,\delbar_2\inv B\ra
	\right)
			= \int_C\omega\wed\la A,\delbar_2\inv B\ra
				+\int_C\omega\wed\la \delbar_1\inv A,B\ra.	\]
\end{proof}

\begin{remark}\label{rk:3form-cech}
As for the metric, the above analysis of the 3-form can also be performed in the \v{C}ech model.
\end{remark}


%

%


\section{The action and its variation}\label{s:action-variation}

Let us now consider the action of the two-dimensional sigma model built from the metric \eqref{eq:g} and 3-form \eqref{eq:3form}, namely\footnote{The factor of $\frac{1}{3}$ could be absorbed into the definition of the 3-form; leaving it explicit here will simplify some formulae later.}
\begin{align}\label{eq:action}
	S[\sigma]
		&= \int_{\mb{D}^2} \|d\sigma\|^2 d\vol_{\mb{D}^2}
			+ \frac{1}{3}\int_{\mb{D}^2\times\mb{R}_{\geq 0}} \tilde{\sigma}^\ast(\Omega)
\end{align}
where $\mb{D}^2$ is a two-dimensional disc and $\tilde\sigma$ is an extension of $\sigma$ to $\mb{D}^2\times\mb{R}_{\geq0}$.\footnote{Since $\Omega$ is closed, the classical equations of motion are insensitive to the choice of extension.}

Call the first term of \eqref{eq:action} $S_g$ and the second term $S_\Omega$. We will examine the variation of each of these terms. Let $\eta$ be a constant metric on the disc $\mb{D}^2$ (we will eventually take this to be the metric $dt_1dt_2$).

Since our spacetime is the disc we can work in coordinates $(\lambda^{ia})$ on the target space -- all of our equations will be derived using these coordinates.

\begin{prop}\label{p:vary-Sg}
Let $\Sigma(t_1,t_2,\tau)$ be a family of maps from $\mb{D}^2$ to $\mc{M}$ with $\Sigma(t_1,t_2,0)=\sigma$. Then
	\[	\left.\frac{d}{d\tau}\right|_{\tau=0} S_g[\Sigma]
			= \int_{\mb{D}^2}\left.\frac{\pd\Sigma^{kc}}{\pd\tau}\right|_{\tau=0}
				\eta^{\alpha\beta}\left(
					-2\sigma^\ast(g_{ia,kc})\frac{\pd^2\sigma^{ia}}{\pd t_\alpha \pd t_\beta}
					- \sigma^\ast\left(
						\frac{\pd g_{kc,jb}}{\pd\lambda^{ia}}
						+\frac{\pd g_{ia,kc}}{\pd\lambda^{jb}}
						-\frac{\pd g_{ia,jb}}{\pd\lambda^{kc}}
					\right)
					\frac{\pd\sigma^{ia}}{\pd t_\alpha}
					\frac{\pd\sigma^{jb}}{\pd t_\beta}
				\right)
				dt_1\wed dt_2.	\]
\end{prop}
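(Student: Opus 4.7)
The plan is to treat this as the standard first-variation calculation for a Dirichlet-type energy functional, working entirely in the local coordinates $(\lambda^{ia})$ already set up. Since $\mb{D}^2$ carries the flat constant metric $\eta$, the integrand of $S_g$ may be written as
$$\|d\Sigma\|^2 d\vol_{\mb{D}^2} = \eta^{\alpha\beta}\, \Sigma^\ast(g_{ia,jb}) \frac{\pd\Sigma^{ia}}{\pd t_\alpha}\frac{\pd\Sigma^{jb}}{\pd t_\beta}\, dt_1\wed dt_2,$$
so I can differentiate in $\tau$ by the chain rule and the Leibniz rule. This produces three terms: one in which $\tau$-differentiation hits $\Sigma^\ast(g_{ia,jb})$, and two in which it hits one of the factors of $\pd\Sigma/\pd t_\alpha$.

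Next I would combine the two ``kinetic'' terms using the symmetry of the metric $g_{ia,jb}=g_{jb,ia}$ together with the symmetry of $\eta^{\alpha\beta}$, and then integrate by parts in $t_\alpha$ (discarding the boundary contribution, which is legitimate for variations of compact support or, equivalently, for deriving the interior Euler--Lagrange equations). Expanding the $\pd_{t_\alpha}$ of the resulting expression via the chain rule produces two pieces: one containing $\pd^2\sigma^{jb}/\pd t_\alpha\pd t_\beta$, and one containing $(\pd g_{ia,jb}/\pd\lambda^{kc})(\pd\sigma^{kc}/\pd t_\alpha)(\pd\sigma^{jb}/\pd t_\beta)$.

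Finally, I would relabel dummy indices so that every term is proportional to $V^{kc} := \left.\pd\Sigma^{kc}/\pd\tau\right|_{\tau=0}$. The second-order term then immediately gives $-2\sigma^\ast(g_{ia,kc})\,\pd^2\sigma^{ia}/\pd t_\alpha\pd t_\beta$ after using $g_{jb,kc}=g_{kc,jb}$. For the first-order terms, the piece with $V^{kc}$ directly produces $+\pd g_{ia,jb}/\pd\lambda^{kc}$, while the piece coming from integration by parts contributes $-2\,\pd g_{kc,jb}/\pd\lambda^{ia}$. At this point I use the fact that the tensor $\eta^{\alpha\beta}(\pd\sigma^{ia}/\pd t_\alpha)(\pd\sigma^{jb}/\pd t_\beta)$ is symmetric under $(ia)\leftrightarrow(jb)$ to symmetrize the coefficient, converting $-2\,\pd g_{kc,jb}/\pd\lambda^{ia}$ into $-\pd g_{kc,jb}/\pd\lambda^{ia}-\pd g_{ia,kc}/\pd\lambda^{jb}$. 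Assembling everything gives exactly the claimed combination $-(\pd g_{kc,jb}/\pd\lambda^{ia}+\pd g_{ia,kc}/\pd\lambda^{jb}-\pd g_{ia,jb}/\pd\lambda^{kc})$, which is $-2$ times the familiar Christoffel symbol with lowered index.

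There is no genuine obstacle here: the entire argument is formal manipulation of local coordinate expressions together with one integration by parts. The only subtlety worth being explicit about is the bookkeeping of index symmetries (especially the symmetrization step that reconstitutes the Christoffel combination from the single IBP term), and the implicit assumption that the variation $V^{kc}$ is chosen so that boundary contributions vanish; both are routine.
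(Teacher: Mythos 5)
Your proposal is correct and is precisely the ``standard exercise in the calculus of variations'' that the paper invokes as its entire proof: expand $S_g$ in the coordinates $(\lambda^{ia})$, differentiate under the integral, integrate by parts, and symmetrize to reconstitute the Christoffel combination. The index bookkeeping and signs all check out against the stated formula, so there is nothing to add.
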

\begin{proof}
A standard exercise in the calculus of variations.
\end{proof}

\begin{prop}\label{p:vary-Somega}
With notation as in Proposition \ref{p:vary-Sg}, the variation of $S_\Omega$ is
	\[	\left.\frac{d}{d\tau}\right|_{\tau=0} S_\Omega[\Sigma]
			= \int_{\mb{D}^2}\left.\frac{\pd\Sigma^{kc}}{\pd\tau}\right|_{\tau=0} \sigma^\ast(\Omega_{kc,ia,jb})\epsilon^{\alpha\beta}
			\frac{\pd\sigma^{ia}}{\pd t_\alpha}
			\frac{\pd\sigma^{jb}}{\pd t_\beta}
			dt_1\wed dt_2.	\]
\end{prop}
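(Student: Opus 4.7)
The plan is to use the standard Stokes-theorem calculation for Wess--Zumino-type terms. Since $\Omega$ is closed by Proposition \ref{p:3-form}, the variation of $S_\Omega$ will collapse from a bulk integral on $\mb{D}^2 \times \mb{R}_{\geq 0}$ to a boundary integral on $\mb{D}^2$. To set this up cleanly, I would first extend the one-parameter family to a homotopy of extensions: choose a smooth map $\tilde\Sigma : \mb{D}^2 \times \mb{R}_{\geq 0} \times (-\epsilon, \epsilon) \to \mc{M}$ so that $\tilde\Sigma(-,-,-,\tau)$ is the extension of $\Sigma(-,-,\tau)$ to $\mb{D}^2 \times \mb{R}_{\geq 0}$ used in the definition of $S_\Omega$, and set $V = \partial_\tau \tilde\Sigma|_{\tau = 0}$, $\tilde\sigma = \tilde\Sigma|_{\tau = 0}$. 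Differentiating under the integral,
\[
    \left.\frac{d}{d\tau}\right|_{\tau=0} S_\Omega[\Sigma] = \frac{1}{3}\int_{\mb{D}^2 \times \mb{R}_{\geq 0}} \tilde\sigma^*(\mc{L}_V \Omega).
\]

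Next, Cartan's magic formula together with $d\Omega = 0$ gives $\mc{L}_V \Omega = d(\iota_V \Omega)$, hence $\tilde\sigma^*(\mc{L}_V \Omega) = d(\tilde\sigma^* \iota_V \Omega)$. Stokes' theorem then converts the bulk integral into one over $\partial(\mb{D}^2 \times \mb{R}_{\geq 0}) = (\partial \mb{D}^2 \times \mb{R}_{\geq 0}) \cup (\mb{D}^2 \times \{0\})$; assuming variations are compactly supported in the interior of $\mb{D}^2$, only the $s = 0$ face contributes, and on it $V$ restricts to $\partial_\tau \Sigma|_{\tau=0}$ as a vector field on $\mc{M}$ along $\sigma$. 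Finally, I would expand $\sigma^* \iota_V \Omega$ in the local coordinates $(\lambda^{ia})$ on $\mc{M}$ and $(t_1, t_2)$ on $\mb{D}^2$: total antisymmetry of $\Omega_{kc, ia, jb}$ makes the contraction $\iota_V \Omega$ produce three equal terms which combine with the $1/3$ prefactor, while $d\sigma^{ia} \wedge d\sigma^{jb}$ expands as $\epsilon^{\alpha\beta} \partial_\alpha \sigma^{ia} \partial_\beta \sigma^{jb} \, dt_1 \wedge dt_2$, delivering the claimed expression.

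The main obstacle is the careful bookkeeping of numerical prefactors and orientation signs. The stated formula carries no explicit combinatorial prefactor, so one must verify that the $1/3$ from the action, the factor $3$ from contracting a totally antisymmetric rank-$3$ form against $V$, the factor $1/2!$ from the remaining two-form, and the factor $2$ gained by rewriting $\partial_\alpha\sigma^{ia} \partial_\beta\sigma^{jb} dt^\alpha \wedge dt^\beta$ as $\epsilon^{\alpha\beta}\partial_\alpha\sigma^{ia}\partial_\beta\sigma^{jb} dt_1 \wedge dt_2$, together with the boundary-orientation sign, conspire to produce the formula as stated (or equivalently, that these match whatever normalization convention for $\Omega_{kc,ia,jb}$ is implicit in the statement). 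Independence of the result from the chosen extension $\tilde\sigma$ follows automatically from $d\Omega = 0$ and the same Stokes argument.
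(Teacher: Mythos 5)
Your proposal is correct and is essentially the paper's argument: both reduce the variation of the Wess--Zumino term to a boundary integral over $\mb{D}^2$ using only $d\Omega=0$, with the factor of $3$ coming from total antisymmetry of $\Omega$ cancelling the $\tfrac{1}{3}$ in the action. The only cosmetic difference is that you use the infinitesimal homotopy formula $\mc{L}_V\Omega = d\iota_V\Omega$, whereas the paper applies Stokes' theorem to the closed form $\tilde\Sigma^\ast\Omega$ on the four-manifold $\mb{D}^2\times\mb{R}_{\geq0}\times[0,T]$ and then differentiates in $T$ at $T=0$.
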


\begin{proof}
Let $\tilde\Sigma$ be an extension of the family of maps $\Sigma$ from Proposition \ref{p:vary-Sg} to $\mb{D}^2\times\mb{R}_{\geq0}$. Since $\Omega$ is closed,
	\[	\int_{\pd(\mb{D}^2\times \mb{R}_{\geq 0}\times [0,T])}
		\tilde\Sigma^\ast(\Omega)
		= 0
		= \int_{\mb{D}^2\times\mb{R}_{\geq0}}\tilde\Sigma_T^\ast(\Omega)
			-\int_{\mb{D}^2\times\mb{R}_{\geq0}}\tilde\sigma^\ast(\Omega)
			-\int_{\mb{D}^2\times[0,T]}\Sigma^\ast\Omega.	\]
So,
\begin{align*}
	\int_{\mb{D}^2\times\mb{R}_{\geq0}}\tilde\Sigma_T^\ast(\Omega)
			-\int_{\mb{D}^2\times\mb{R}_{\geq0}}\tilde\sigma^\ast(\Omega)
		&= \int_{\mb{D}^2\times[0,T]}\Sigma^\ast(\Omega_{ia,jb,kc})d\Sigma^{ia}\wed d\Sigma^{jb}\wed d\Sigma^{kc} \\
		&= 3\int_{\mb{D}^2\times[0,T]}\frac{\pd\Sigma^{kc}}{\pd\tau}\Sigma^\ast(\Omega_{kc,ia,jb})d_{\mb{D}}\Sigma^{ia}\wed d_{\mb{D}}\Sigma^{jb}\wed d\tau
\end{align*}
where $d_{\mb{D}}$ is the de Rham differential on $\mb{D}^2$. The result now follows by taking $\left.\frac{d}{dT}\right|_{T=0}$ and dividing by 3.
\end{proof}

Putting these two variational formulae together, we obtain:

\begin{cor}\label{cor:EOM}
The equations of motion for the action $S[\sigma]$ are
	\[	\sigma^\ast(g_{ia,kc})\eta^{\alpha\beta}\frac{\pd^2\sigma^{ia}}{\pd t_\alpha \pd t_\beta}
		+
		\frac{1}{2}\sigma^\ast\left(
			\frac{\pd g_{kc,jb}}{\pd\lambda^{ia}}
				+\frac{\pd g_{ia,kc}}{\pd\lambda^{jb}}
				-\frac{\pd g_{ia,jb}}{\pd\lambda^{kc}}
		\right)
		\eta^{\alpha\beta}
		\frac{\pd\sigma^{ia}}{\pd t_\alpha}
		\frac{\pd\sigma^{jb}}{\pd t_\beta}
		-
		\frac{1}{2}\sigma^\ast(\Omega_{ia,jb,kc})
		\epsilon^{\alpha\beta}
		\frac{\pd\sigma^{ia}}{\pd t_\alpha}
		\frac{\pd\sigma^{jb}}{\pd t_\beta}
		=
		0,	\]
or in expanded form for the metric $dt_1 dt_2$,
\begin{align*}
	2\sigma^\ast(g_{ia,kc})\frac{\pd^2\sigma^{ia}}{\pd t_1 \pd t_2}
		+
		\frac{1}{2}\sigma^\ast\left(
			\frac{\pd g_{kc,jb}}{\pd\lambda^{ia}}
				+\frac{\pd g_{ia,kc}}{\pd\lambda^{jb}}
				-\frac{\pd g_{ia,jb}}{\pd\lambda^{kc}}
		\right)
		&\left(
			\frac{\pd\sigma^{ia}}{\pd t_1}
			\frac{\pd\sigma^{jb}}{\pd t_2}
			+
			\frac{\pd\sigma^{ia}}{\pd t_2}
			\frac{\pd\sigma^{jb}}{\pd t_1}
		\right) \\
		&-
		\frac{1}{2}\sigma^\ast(\Omega_{ia,jb,kc})
		\left(
			\frac{\pd\sigma^{ia}}{\pd t_1}
			\frac{\pd\sigma^{jb}}{\pd t_2}
			-
			\frac{\pd\sigma^{ia}}{\pd t_2}
			\frac{\pd\sigma^{jb}}{\pd t_1}
		\right)
		=
		0.
\end{align*}
\end{cor}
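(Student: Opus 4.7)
The plan is simply to combine the two variational formulae from Propositions \ref{p:vary-Sg} and \ref{p:vary-Somega} via linearity, then apply the fundamental lemma of the calculus of variations.

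By linearity of $\left.\frac{d}{d\tau}\right|_{\tau=0}$, the total variation $\delta S = \delta S_g + \frac{1}{3}\delta S_\Omega$ is an integral over $\mb{D}^2$ of $\left.\frac{\pd\Sigma^{kc}}{\pd\tau}\right|_{\tau=0}$ against a local differential expression $F_{kc}(\sigma,\pd\sigma,\pd^2\sigma)$ built from $\sigma$, its first and second partial derivatives, and the components of $g$, $\pd g$, and $\Omega$ pulled back by $\sigma$. Substituting the formulae from the two propositions directly determines $F_{kc}$.

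Next, since the infinitesimal variation $\left.\frac{\pd\Sigma^{kc}}{\pd\tau}\right|_{\tau=0}$ may be chosen independently for each coordinate index $kc$ as an essentially arbitrary compactly supported function on $\mb{D}^2$ (e.g.\ by varying $\Sigma$ along a localised bump profile in a single coordinate direction), the fundamental lemma of the calculus of variations implies that $\sigma$ is a critical point of $S$ if and only if $F_{kc}(\sigma,\pd\sigma,\pd^2\sigma)=0$ pointwise for each $kc$. Dividing through by $-2$, and using the cyclic invariance $\Omega_{kc,ia,jb} = \Omega_{ia,jb,kc}$ of the totally antisymmetric $3$-tensor $\Omega$ together with the symmetry $g_{ia,kc} = g_{kc,ia}$ to put the result into the stated shape, yields the first (unexpanded) form of the equations of motion.

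The expanded form is then obtained by specialising $\eta = dt_1\, dt_2$, for which the inverse-metric components satisfy $\eta^{12}=\eta^{21}=1$ and $\eta^{11}=\eta^{22}=0$; writing out the sums over $\alpha,\beta\in\{1,2\}$, the $\eta^{\alpha\beta}$-contraction symmetrizes its two vector arguments in $(\alpha,\beta)$ while $\epsilon^{\alpha\beta}$ antisymmetrizes them. There is no substantive obstacle here beyond careful bookkeeping of signs and numerical factors; the only place where one must pause is the tracking of combinatorial prefactors between the $3$-form term and the ambient $\frac{1}{3}$ in the action.
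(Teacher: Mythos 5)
Your proposal is correct and takes essentially the same route as the paper, which states Corollary \ref{cor:EOM} immediately after Propositions \ref{p:vary-Sg} and \ref{p:vary-Somega} with only the remark that one ``puts these two variational formulae together'' --- i.e.\ exactly your combination of the two variations, the fundamental lemma, division by $-2$, and the symmetries $g_{ia,kc}=g_{kc,ia}$ and $\Omega_{kc,ia,jb}=\Omega_{ia,jb,kc}$. The one bookkeeping point to fix: in the paper's convention $S_\Omega$ denotes the second term of \eqref{eq:action} \emph{including} the factor $\tfrac13$ (the proof of Proposition \ref{p:vary-Somega} divides by $3$ at the end for precisely this reason), so the total variation is $\delta S_g+\delta S_\Omega$ rather than your $\delta S_g+\tfrac13\,\delta S_\Omega$, which as written would introduce a spurious extra $\tfrac13$ on the $\Omega$ term.
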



%

%


\section{Connections on the target space}\label{s:cxn-target}

In this section I will show that there are two natural families of algebraic connections on the target space $\mc{M}$, parametrised by $C_0$, and each related to one of the divisors $D_1$ or $D_2$.


\subsection{What is a connection?}\label{ss:what-is-cxn}

Let $\pi_{\mc{M}}:C\times\mc{M}\to\mc{M}$ be the projection. Then by the defining condition of $\mc{M}$,
	\[	R\pi_{\mc{M\ast}}(\ad(\mc{P})\tens\pi_C^\ast\mc{O}(D_i)) = 0,	\]
where $\pi_C:C\times\mc{M}\to C$ is the projection to $C$.

Consider $\Delta_{\mc{M}}:\mc{M}\to\mc{M}\times\mc{M}$, the diagonal map. This embedding corresponds to an ideal sheaf $\mc{I}$ on $\mc{M}\times\mc{M}$, and we can consider the quasicoherent sheaf of algebras on $\mc{M}$
	\[	0\to\Omega_{\mc{M}}^1
			\to\underbrace{p_{1\ast}\left(\mc{O}_{\mc{M}\times\mc{M}}/\mc{I}^2\right)}_{J^1(\mc{O}_{\mc{M}})}
			\to\mc{O}_{\mc{M}}
			\to 0	\]
where $p_1:\mc{M}\times\mc{M}\to\mc{M}$ is the first projection (and similarly for $p_2$). Note that $J^1(\mc{O}_{\mc{M}}) = p_{1\ast}p_2^\ast(\mc{O}_{\mc{M}})$.

Set $\Delta^{(1)}:=\rSpec_{\mc{M}\times\mc{M}}(\mc{O}_{\mc{M}\times\mc{M}}/\mc{I}^2)\subset\mc{M}\times\mc{M}$. Then a connection on an object\footnote{This is deliberately a bit vague: one can consider connections on objects in any Grothendieck fibration over our category of spaces, see \cite{nlab:grothendieck_connection}. We will not need this generality, however---blessedly, we restrict ourselves to vector bundles.} $B\to\mc{M}$ is an isomorphism $\phi:(p_1^\ast B)|_{\Delta^{(1)}}\simeq(p_2^\ast B)|_{\Delta^{(1)}}$ which restricts to the identity on $\mc{M}$.


\subsection{The connection of interest}\label{ss:cxn-of-interest}

Let $U=\Spec(R)\subset\mc{M}$ be an affine patch and consider trying to define a connection on $\mc{P}|_{C_0\times U}$ relative to $C_0$. Start by taking
	\[	\mc{P}|_{C\times U}\to C\times U	\]
and consider a square-zero extension
	\[	0\to J\to R' \to R\to 0,
		\qquad U':=\Spec(R').	\]
Equivalence classes of lifts of $\mc{P}|_{C\times U}$ together with its trivialisation on $Q$ to $C\times U'$ are parametrised by
	\[	R^1\pi_{U\ast}(\ad(\mc{P}|_{C\times U})(-Q))\tens_R J.	\]
Set $\supp(D_i)=\{x_{i,1},\ldots,x_{i,n}\}$ and let $\mb{D}_{i.j}$ be the formal disc around the point $x_{i,j}$ (similarly $\mb{D}_{i,j}^\times$ for the punctured formal disc). Consider the cover of $C$ given by
	\[	\mc{U}_i = \left\{ C_i, \bigcup_{j=1}^n\mb{D}_{i,j} \right\}.	\]
The above cohomology group can be calculated using the \v{C}ech complex for this cover:
\[\begin{tikzcd}
	\check{C}^0(\mc{U}_i;\ad(\mc{P})(-Q))\ar[d,equals]\ar[r,"\delta"]
		& \check{C}^1(\mc{U}_i;\ad(\mc{P})(-Q))\ar[d,equals] \\
	\Gamma(C_i;\ad(\mc{P})(-Q))\oplus
	\bigoplus_1^n\Gamma(\mb{D}_{i,j};\ad(\mc{P}))
		& \bigoplus_1^n\Gamma(\mb{D}^\times_{i,j};\ad(\mc{P}))
\end{tikzcd}\]
Consider the exact sequence on $C$
	\[	0\to \mc{O}_C(-Q) \to \mc{O}_C(D_i)\to
		\underbrace{\mc{O}_{D_i}(D_i)}_{\bigoplus_{j=1}^{g-1}T_{x_{i,j}}C}
		\to 0	\]
which yields the exact sequences on $C\times U$
	\[	0\to \ad(\mc{P}|_{C\times U})(-Q)
			\to \ad(\mc{P}|_{C\times U})\tens\pi_U^\ast\mc{O}_C(D_i)
			\to \underbrace{\ad(\mc{P}|_{C\times U})\tens \pi_U^\ast \mc{O}_{D_i}(D_i)}_{\bigoplus_1^{g-1}\ad(\mc{P}|_{x_{i,j}\times U})\tens T_{x_{i,j}}C}
			\to 0	\]
and so a sequence of \v{C}ech complexes with exact rows
\[\begin{tikzcd}[column sep = small]
	0\ar[r]
		& \check{C}^0(\mc{U}_i;\ad(\mc{P}|_{C\times U})(-Q)) \ar[r]\ar[d,"\delta"]
			& \check{C}^0(\mc{U}_i;\ad(\mc{P}|_{C\times U})\tens\pi_U^\ast\mc{O}_C(D_i)) \ar[r]\ar[d,"\sim"]
				& \bigoplus_1^n\ad(\mc{P}|_{x_{i,j}\times U})\tens T_{x_{i,j}}C\ar[d]\ar[r]
					& 0 \\
	0\ar[r]
		& \check{C}^1(\mc{U}_i;\ad(\mc{P}|_{C\times U})(-Q))\ar[r]\ar[ur,dashed]
			& \check{C}^1(\mc{U}_i;\ad(\mc{P}|_{C\times U})\tens\pi_U^\ast\mc{O}_C(D_i))\ar[r]
				& 0
\end{tikzcd}\]

The dashed arrow gives a map that takes a lift of $\ad(\mc{P}|_{C\times U})(-Q)$ to $C\times U'$ to a trivialisation of that lift \emph{with first-order poles on $D_i$}.

Given two lifts, $\tilde{\mc{P}}$, $\tilde{\mc{P}}'$, taking their difference under the dashed arrow gives a unique isomorphism
	\[	\tilde{\mc{P}}|_{C_i\times U}\simeq\tilde{\mc{P}}'|_{C_i\times U}	\]
with first-order poles on $D_i$.

So given the two pulllbacks of $\ad(\mc{P}|_{C_0\times U})(-Q)$ to the square-zero neighbourhood, we obtain a unique isomorphism between them with first-order poles on $D_i$, call this $\phi_{i,U}$.

The above diagram is compatible with restricting the affine open $V\subset U$, and the corresponding maps $\phi_{i,V}$ are unique; thus the collection $\{\phi_{i,U}\}_{U\subset\mc{M}}$ glues to give a connection $\phi_i$ on $\ad(\mc{P}|_{C_0\times\mc{M}})(-Q)$ relative to $C_0\times\mc{M}\to C_0$.

Denote the connection associated to the divisor $D_1$ by $\gd^+$ and the connection associated to the divisor $D_2$ by $\gd^-$. The diagram above also tells us how to derive the connection 1-form in local coordinates $(\lambda^{ia})$ centred at $P\in\mc{M}$: writing $D_1= d+\alpha= d+\alpha_{ia}d\lambda^{ia}$ and $D_2= d+\beta= d+\beta_{ia}d\lambda^{ia}$, the connection components are precisely given by the singular gauge trivialisations of the basis elements, i.e.\
\begin{alignat}{3}\label{eq:cxn-comps}
	\alpha_{ia}(\vec\lambda)
		&= \delbar_{P+\vec\lambda,1}\inv A_{ia},
		&\qquad
	&\beta_{ia}(\vec\lambda)
		&= \delbar_{P+\vec\lambda,2}\inv A_{ia}.
\end{alignat}

\section{The induced connection and flatness equation}\label{s:cxn-induced}

Now, given the connections $\gd^\pm$ and a field $\sigma:\mb{D}^2\to\mc{M}$, we wish to define an induced connection $D(\sigma)$ on $\mb{D}^2$ as follows. Recall that we have coordinates $t_1,t_2$ on $\mb{D}^2$, which are the null directions for our metric $dt_1 dt_2$, and set $\pd_\alpha = \frac{\pd}{\pd t_\alpha}$.

\begin{defn}\label{def:cxn-induced}
The induced connection $D(\sigma)$ is defined by the equations
\begin{alignat}{3}
	D(\sigma)_{\pd_1} &:= (\sigma^\ast\gd^+)_{\pd_1},
	& \qquad
	& D(\sigma)_{\pd_2} &:= (\sigma^\ast\gd^-)_{\pd_2}.
\end{alignat}
\end{defn}

In terms of the connection forms on the target we can write these equations as
\begin{align*}
	D(\sigma)_{\pd_1} &:= \frac{\pd}{\pd t_1} + \left[\sigma^\ast\alpha\left(\frac{\pd}{\pd t_1}\right),-\right],
	& D(\sigma)_{\pd_2} &:= \frac{\pd}{\pd t_2} + \left[\sigma^\ast\beta\left(\frac{\pd}{\pd t_2}\right),-\right].
\end{align*}
so that in local coordinates we can write
	\[	D(\sigma) = d + \sigma^\ast\alpha\left(\frac{\pd}{\pd t_1}\right)dt_1 + \sigma^\ast\beta\left(\frac{\pd}{\pd t_2}\right)dt_2.	\]

\begin{prop}\label{p:flatness}
$D(\sigma)$ is flat if and only if
\begin{align}\label{eq:flatness}
	\sigma^\ast(\beta_{ia}-\alpha_{ia})\frac{\pd^2\sigma^{ia}}{\pd t_1 \pd t_2}
	+
	\sigma^\ast\left(
		\frac{\pd\beta_{jb}}{\pd\lambda^{ia}}
		-
		\frac{\pd\alpha_{ia}}{\pd\lambda^{jb}}
		+
		[\alpha_{ia},\beta_{jb}]
	\right)
	\frac{\pd\sigma^{ia}}{\pd t_1}
	\frac{\pd\sigma^{jb}}{\pd t_2}
	=
	0.
\end{align}
\end{prop}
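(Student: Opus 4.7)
The plan is to verify flatness by directly computing the curvature 2-form of $D(\sigma)$ in local coordinates and identifying the result with the left-hand side of \eqref{eq:flatness}. Since $\mb{D}^2$ is 2-dimensional and $D(\sigma)$ is already written in the form $D(\sigma) = d + A_1 dt_1 + A_2 dt_2$ with $A_1 = \sigma^\ast\alpha(\pd/\pd t_1)$ and $A_2 = \sigma^\ast\beta(\pd/\pd t_2)$, the flatness condition reduces to the vanishing of the single coefficient
\[
F(\sigma) = \pd_1 A_2 - \pd_2 A_1 + [A_1,A_2].
\]

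First I would choose local coordinates $(\lambda^{ia})$ on a neighbourhood of $\sigma(p)\in\mc{M}$ as in \eqref{eq:cxn-comps}, so that $\alpha = \alpha_{ia}(\vec\lambda)\,d\lambda^{ia}$ and $\beta = \beta_{ia}(\vec\lambda)\,d\lambda^{ia}$. Pulling back along $\sigma$ gives
\[
A_1 = \sigma^\ast(\alpha_{ia})\,\frac{\pd\sigma^{ia}}{\pd t_1}, \qquad A_2 = \sigma^\ast(\beta_{ia})\,\frac{\pd\sigma^{ia}}{\pd t_2}.
\]
Then I would apply $\pd_1$ and $\pd_2$ via the chain rule. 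The derivative $\pd_1 A_2$ produces a term $\sigma^\ast(\beta_{ia})\,\pd_1\pd_2\sigma^{ia}$ together with a term involving $\sigma^\ast(\pd\beta_{ia}/\pd\lambda^{jb})$, and similarly for $\pd_2 A_1$. The mixed partial terms combine to give the coefficient $\sigma^\ast(\beta_{ia} - \alpha_{ia})$ of $\pd_1\pd_2\sigma^{ia}$ in the stated equation.

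Next I would relabel dummy indices in the remaining terms to factor out $\tfrac{\pd\sigma^{ia}}{\pd t_1}\tfrac{\pd\sigma^{jb}}{\pd t_2}$, producing
\[
\sigma^\ast\!\left(\frac{\pd\beta_{jb}}{\pd\lambda^{ia}} - \frac{\pd\alpha_{ia}}{\pd\lambda^{jb}}\right)\frac{\pd\sigma^{ia}}{\pd t_1}\frac{\pd\sigma^{jb}}{\pd t_2}.
\]
Finally, the bracket $[A_1,A_2]$ expands as $\sigma^\ast[\alpha_{ia},\beta_{jb}]\,\tfrac{\pd\sigma^{ia}}{\pd t_1}\tfrac{\pd\sigma^{jb}}{\pd t_2}$, which slots in to complete the coefficient $\sigma^\ast(\pd\beta_{jb}/\pd\lambda^{ia} - \pd\alpha_{ia}/\pd\lambda^{jb} + [\alpha_{ia},\beta_{jb}])$ appearing in \eqref{eq:flatness}. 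Since the resulting expression is the curvature component in coordinates, flatness is equivalent to its vanishing.

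There is really no significant obstacle here beyond careful index bookkeeping; the main point is simply that $D(\sigma)$ is defined so as to \emph{wedge together} the two target-space connections along the null directions $t_1,t_2$, so its curvature mixes $\alpha$ and $\beta$ in precisely the asymmetric way that produces \eqref{eq:flatness}. The interesting geometry is already hidden in the construction of $\gd^\pm$; verifying the Proposition itself is a direct computation once the chain rule is unwound.
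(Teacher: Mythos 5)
Your proposal is correct and follows essentially the same route as the paper: both compute the single curvature coefficient $\pd_1 A_2 - \pd_2 A_1 + [A_1,A_2]$ in the local coordinates $(\lambda^{ia})$ via the chain rule and relabel dummy indices to match \eqref{eq:flatness}. No gaps.
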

\begin{proof}
$D(\sigma)$ has curvature
	\[	\frac{F(\sigma)}{dt_1\wed dt_2}
			=
			\frac{\pd}{\pd t_1}\left(
					\sigma^\ast\beta\left(\frac{\pd}{\pd t_2}\right)
				\right)
			-
			\frac{\pd}{\pd t_2}\left(
					\sigma^\ast\alpha\left(\frac{\pd}{\pd t_1}\right)
				\right)
			+
			\left[
				\sigma^\ast\alpha\left(\frac{\pd}{\pd t_1}\right),
				\sigma^\ast\beta\left(\frac{\pd}{\pd t_2}\right)
			\right].	\]
In local coordinates $(\lambda^{ia})$ near $P\in\mc{M}$ we write
\begin{alignat}{5}
	\alpha &= \alpha_{ia}d\lambda^{ia},
	& \qquad
	&\beta &= \beta_{ia}d\lambda^{ia},
	& \qquad
	&\sigma&= (\sigma^{ia}(t_1,t_2)),
\end{alignat}
so that
\begin{align*}
	\sigma^\ast\alpha
		&= \alpha_{ia}(\sigma(t_1,t_2))d\sigma^{ia}
			= \sigma^\ast(\alpha_{ia})\frac{\pd\sigma^{ia}}{\pd t_\alpha} dt_\alpha,
\end{align*}
and similarly for $\sigma^\ast\beta$. Then
\begin{align*}
	\frac{\pd}{\pd t_1}\left(
		\sigma^\ast\beta\left(\frac{\pd}{\pd t_2}\right)
	\right)
		&= \frac{\pd}{\pd t_1}\left(
				\beta_{ia}(\sigma(t_1,t_2))\frac{\pd\sigma^{ia}}{\pd t_2}
			\right) \\
		&= \frac{\pd\beta_{ia}}{\pd\sigma^{jb}}
			\frac{\pd\sigma^{jb}}{\pd t_1}
			\frac{\pd\sigma^{ia}}{\pd t_2}
			+
			\beta_{ia}(\sigma(t_1,t_2))\frac{\pd^2\sigma^{ia}}{\pd t_1 \pd t_2} \\
		&= \sigma^\ast\left(\frac{\pd\beta_{ia}}{\pd\lambda^{jb}}\right)
		\frac{\pd\sigma^{jb}}{\pd t_1}\frac{\pd\sigma^{ia}}{\pd t_2}
		+
		\sigma^\ast(\beta_{ia})\frac{\pd^2\sigma^{ia}}{\pd t_1 \pd t_2}
\end{align*}
and similarly for $\alpha$, so that
\begin{align*}
	\frac{F(\sigma)}{dt_1\wed dt_2}
			&=
			\frac{\pd}{\pd t_1}\left(
					\sigma^\ast\beta\left(\frac{\pd}{\pd t_2}\right)
				\right)
			-
			\frac{\pd}{\pd t_2}\left(
					\sigma^\ast\alpha\left(\frac{\pd}{\pd t_1}\right)
				\right)
			+
			\left[
				\sigma^\ast\alpha\left(\frac{\pd}{\pd t_1}\right),
				\sigma^\ast\beta\left(\frac{\pd}{\pd t_2}\right)
			\right] \\
			&=
			\sigma^\ast\left(\frac{\pd\beta_{ia}}{\pd\lambda^{jb}}\right)
			\frac{\pd\sigma^{jb}}{\pd t_1}
			\frac{\pd\sigma^{ia}}{\pd t_2}
			+
			\sigma^\ast\beta_{ia}\frac{\pd^2\sigma^{ia}}{\pd t_1 \pd t_2} \\
			&\mspace{100mu}
			-
			\sigma^\ast\left(\frac{\pd\alpha_{ia}}{\pd\lambda^{jb}}\right)
			\frac{\pd\sigma^{jb}}{\pd t_2}
			\frac{\pd\sigma^{ia}}{\pd t_1}
			-
			\sigma^\ast\alpha_{ia}\frac{\pd^2\sigma^{ia}}{\pd t_1 \pd t_2}
			+
			\left[
				\sigma^\ast\alpha_{ia}
				\frac{\pd\sigma^{ia}}{\pd t_1},
				\sigma^\ast\beta_{jb}
				\frac{\pd\sigma^{jb}}{\pd t_2}
			\right] \\
			&=
			\sigma^\ast(\beta_{ia}-\alpha_{ia})
			\frac{\pd^2\sigma^{ia}}{\pd t_1 \pd t_2}
			+
			\sigma^\ast\left(
				\frac{\pd\beta_{jb}}{\pd\lambda^{ia}}
				-
				\frac{\pd\alpha_{ia}}{\pd\lambda^{jb}}
				+
				[\alpha_{ia},\beta_{jb}]
			\right)
			\frac{\pd\sigma^{ia}}{\pd t_1}\frac{\pd\sigma^{jb}}{\pd t_2}.
\end{align*}
\end{proof}

I am soon going to want to compare the flatness equation to the equations of motion; for this reason, it is useful to ``re-symmetrise'' the flatness equation to place it in a form that more closely resembles the desired result. Namely, we take the expression \eqref{eq:flatness} for $F(\sigma)$ and rewrite it as
\begin{align*}
	\sigma^\ast(\beta_{ia}-\alpha_{ia})
	&\frac{\pd^2\sigma^{ia}}{\pd t_1 \pd t_2}
	+
	\sigma^\ast\left(
		\frac{\pd\beta_{jb}}{\pd\lambda^{ia}}
		-
		\frac{\pd\alpha_{ia}}{\pd\lambda^{jb}}
		+
		[\alpha_{ia},\beta_{jb}]
	\right)
	\frac{\pd\sigma^{ia}}{\pd t_1}
	\frac{\pd\sigma^{jb}}{\pd t_2}	\\
	&=
		\sigma^\ast(\beta_{ia}-\alpha_{ia})\frac{\pd^2\sigma^{ia}}{\pd t_1 \pd t_2} \\
	& \qquad
		+
		\sigma^\ast\left(
		\frac{\pd\beta_{jb}}{\pd\lambda^{ia}}
		-
		\frac{\pd\alpha_{ia}}{\pd\lambda^{jb}}
		+
		[\alpha_{ia},\beta_{jb}]
	\right)
	\left(
		\frac{\pd\sigma^{ia}}{\pd t_1}
		\frac{\pd\sigma^{jb}}{\pd t_2}
		+
		\frac{1}{2}
		\frac{\pd\sigma^{jb}}{\pd t_1}
		\frac{\pd\sigma^{ia}}{\pd t_2}
		-
		\frac{1}{2}
		\frac{\pd\sigma^{jb}}{\pd t_1}
		\frac{\pd\sigma^{ia}}{\pd t_2}
	\right) \\
	&=
		\sigma^\ast(\beta_{ia}-\alpha_{ia})\frac{\pd^2\sigma^{ia}}{\pd t_1 \pd t_2} \\
	& \qquad
		+
		\frac{1}{2}
		\sigma^\ast\left(
		\frac{\pd\beta_{jb}}{\pd\lambda^{ia}}
		-
		\frac{\pd\alpha_{ia}}{\pd\lambda^{jb}}
		+
		[\alpha_{ia},\beta_{jb}]
	\right)
	\left(
		\frac{\pd\sigma^{ia}}{\pd t_1}
		\frac{\pd\sigma^{jb}}{\pd t_2}
		+
		\frac{\pd\sigma^{jb}}{\pd t_1}
		\frac{\pd\sigma^{ia}}{\pd t_2}
	\right) \\
	& \qquad
		+
		\frac{1}{2}
		\sigma^\ast\left(
		\frac{\pd\beta_{jb}}{\pd\lambda^{ia}}
		-
		\frac{\pd\alpha_{ia}}{\pd\lambda^{jb}}
		+
		[\alpha_{ia},\beta_{jb}]
	\right)
	\left(
		\frac{\pd\sigma^{ia}}{\pd t_1}
		\frac{\pd\sigma^{jb}}{\pd t_2}
		-
		\frac{\pd\sigma^{jb}}{\pd t_1}
		\frac{\pd\sigma^{ia}}{\pd t_2}
	\right) \\
	&=
		\sigma^\ast(\beta_{ia}-\alpha_{ia})\frac{\pd^2\sigma^{ia}}{\pd t_1 \pd t_2} \\
	& \qquad
		+
		\frac{1}{4}
		\sigma^\ast\left(
		\frac{\pd\beta_{jb}}{\pd\lambda^{ia}}
		-
		\frac{\pd\alpha_{ia}}{\pd\lambda^{jb}}
		+
		[\alpha_{ia},\beta_{jb}]
		+
		\frac{\pd\beta_{ia}}{\pd\lambda^{jb}}
		-
		\frac{\pd\alpha_{jb}}{\pd\lambda^{ia}}
		+
		[\alpha_{jb},\beta_{ia}]
	\right)
	\left(
		\frac{\pd\sigma^{ia}}{\pd t_1}
		\frac{\pd\sigma^{jb}}{\pd t_2}
		+
		\frac{\pd\sigma^{jb}}{\pd t_1}
		\frac{\pd\sigma^{ia}}{\pd t_2}
	\right) \\
	& \qquad
		+
		\frac{1}{4}
		\sigma^\ast\left(
		\frac{\pd\beta_{jb}}{\pd\lambda^{ia}}
		-
		\frac{\pd\alpha_{ia}}{\pd\lambda^{jb}}
		+
		[\alpha_{ia},\beta_{jb}]
		-
		\frac{\pd\beta_{ia}}{\pd\lambda^{jb}}
		+
		\frac{\pd\alpha_{jb}}{\pd\lambda^{ia}}
		-
		[\alpha_{jb},\beta_{ia}]
	\right)
	\left(
		\frac{\pd\sigma^{ia}}{\pd t_1}
		\frac{\pd\sigma^{jb}}{\pd t_2}
		-
		\frac{\pd\sigma^{jb}}{\pd t_1}
		\frac{\pd\sigma^{ia}}{\pd t_2}
	\right)
\end{align*}


%

%


\section{Flatness and the equations of motion}\label{s:main-theorem}

Given a field $\sigma:\mb{D}^2 \to \mc{M}$, there are two objects
we can now consider:
\begin{itemize}
    \item The action \eqref{eq:action} and corresponding equations
            of motion (Cor.\ \ref{cor:EOM}).
    \item The induced connection on spacetime, $D(\sigma)$ (Def.\ \ref{def:cxn-induced}).
\end{itemize}

The following is the main theorem of this paper:

\begin{thm}
The field $\sigma$ satisfies the equations of motion for $S[\sigma]$ if and only if the connection $D(\sigma)$ is flat.
\end{thm}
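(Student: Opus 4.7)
The plan is to pair the resymmetrized flatness equation derived at the end of Section \ref{s:cxn-induced} against the basis $\{A_{kc}\}$ from \eqref{eq:concentrated-basis} using the bilinear form
\[
    \langle\!\langle X, Y \rangle\!\rangle_P := \int_C \omega \wed \la X, Y \ra,
\]
and to verify that for each index $kc$ the resulting scalar equation is exactly the $kc$-th component of the equation of motion in Cor.\ \ref{cor:EOM}. This pairing is precisely the one appearing in the definitions of both $g$ and $\Omega$, and its key feature---Lemma \ref{l:adjoints}---says that $\delbar_{P,1}\inv$ and $-\delbar_{P,2}\inv$ are adjoint under it, which dovetails with the appearance of $\alpha_{ia} = \delbar_{P,1}\inv A_{ia}$ and $\beta_{ia} = \delbar_{P,2}\inv A_{ia}$ in the connection components \eqref{eq:cxn-comps}. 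Since the $A_{kc}$ span the relevant cohomology, the collection of scalar identities is equivalent to the original vector equation, and the theorem then follows from a term-by-term coefficient match.

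The resymmetrized flatness equation and the expanded EOM have identical shapes (one second-derivative term plus one symmetric and one antisymmetric first-derivative term), so the three coefficient matches may be checked independently. For the second-derivative term, a single application of Lemma \ref{l:adjoints} plus symmetry of $\la-,-\ra$ gives
\[
    \int_C \omega \wed \la \beta_{ia} - \alpha_{ia}, -2 A_{kc} \ra = 2 g_{ia, kc},
\]
matching the $2\sigma^\ast(g_{ia,kc})\pd^2_{12}\sigma^{ia}$ term of Cor.\ \ref{cor:EOM}. For the symmetric first-derivative coefficient, I would use the geometric series expansion from Section \ref{sss:metric-1deriv} to write $\pd_{jb}\alpha_{ia}|_P = -\delbar_{P,1}\inv[A_{jb}, \delbar_{P,1}\inv A_{ia}]$ and analogously for $\beta$, then apply Lemma \ref{l:adjoints} together with invariance $\la[X,Y], Z\ra = \la X, [Y, Z]\ra$ to migrate $\delbar\inv$-operators between slots until the pairing collapses, via the explicit formula of Prop.\ \ref{p:g-deriv}, into the Christoffel-like combination $\frac{1}{2}(\pd_{ia}g_{kc,jb} + \pd_{jb}g_{ia,kc} - \pd_{kc}g_{ia,jb})$.

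For the antisymmetric first-derivative coefficient, the strategy is to recognize that the six terms in the $\mathbb{Z}/2$-antisymmetric combination (three from $\pd_\lambda$-derivatives of $\alpha$ and $\beta$, three from the two orderings of the bracket $[\alpha_{ia},\beta_{jb}]$) reproduce, after repeated application of Lemma \ref{l:adjoints}, the six summands in the $S_3$-antisymmetrization defining $\Omega_{ia,jb,kc}$ in \eqref{eq:3form}. This is the main obstacle. The definition of $\Omega$ places $\delbar_1\inv$ in one designated slot and $\delbar_2\inv$ in another, whereas the flatness equation produces pure-$\delbar_1\inv$ nested objects from the $\alpha$-derivatives, pure-$\delbar_2\inv$ ones from the $\beta$-derivatives, and a mixed $[\delbar_{P,1}\inv A_{ia}, \delbar_{P,2}\inv A_{jb}]$ from the commutator; each migration of a $\delbar\inv$ between slots via Lemma \ref{l:adjoints} introduces a sign. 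The delicate step is thus the combinatorial bookkeeping needed to reconcile the $\mathbb{Z}/2$-antisymmetric flatness expression with the fully $S_3$-antisymmetric 3-form. Once this is verified, the theorem follows: the three pairing identities together say that the paired flatness equation coincides, for every $kc$, with the $kc$-th EOM, and non-degeneracy of $\langle\!\langle-,-\rangle\!\rangle_P$ on the span of the $A_{kc}$ upgrades this to equivalence of the vector equations themselves.
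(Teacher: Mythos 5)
Your proposal follows essentially the same route as the paper's proof: pair the resymmetrized flatness equation against the basis $\{A_{kc}\}$ via $\int_C\omega\wed\la-,A_{kc}\ra$, use the adjointness of $\delbar_1\inv$ and $-\delbar_2\inv$ (Lemma \ref{l:adjoints}) together with the geometric-series formulas for $\pd\alpha_{ia}/\pd\lambda^{jb}$ and $\pd\beta_{jb}/\pd\lambda^{ia}$, and match the three coefficient groups term by term (the paper finds a uniform overall factor of $-\tfrac{1}{2}$ relating the paired flatness equation to $\mathrm{EOM}_{kc}$, consistent with your normalization). The combinatorial step you flag as the main obstacle --- reconciling the six flatness terms with the $S_3$-antisymmetrization defining $\Omega$ --- is carried out explicitly in the paper and does close as you describe.
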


\begin{proof}
We proceed by comparing the coefficients of $\frac{\pd^2\sigma^{ia}}{\pd t_1 \pd t_2}$, $\frac{\pd\sigma^{ia}}{\pd t_1}\frac{\pd\sigma^{jb}}{\pd t_2} + \frac{\pd\sigma^{jb}}{\pd t_1}\frac{\pd\sigma^{ia}}{\pd t_2}$, and $\frac{\pd\sigma^{ia}}{\pd t_1}\frac{\pd\sigma^{jb}}{\pd t_2} - \frac{\pd\sigma^{jb}}{\pd t_1}\frac{\pd\sigma^{ia}}{\pd t_2}$ in the equations of motion and flatness equations. We see that we wish to compare
\begin{align}
    \text{Equations of motion}  && \text{Flatness} \nonumber \\
    2 g_{ia,kc} && \beta_{ia} - \alpha_{ia} \label{eq:order2term}\\
    \frac{1}{2}\left(
            \frac{\pd g_{kc,jb}}{\pd\lambda^{ia}}
                +\frac{\pd g_{ia,kc}}{\pd\lambda^{jb}}
                -\frac{\pd g_{ia,jb}}{\pd\lambda^{kc}}
        \right)
        && \frac{1}{4}\left(
        \frac{\pd\beta_{jb}}{\pd\lambda^{ia}}
        -
        \frac{\pd\alpha_{ia}}{\pd\lambda^{jb}}
        +
        [\alpha_{ia},\beta_{jb}]
        +
        \frac{\pd\beta_{ia}}{\pd\lambda^{jb}}
        -
        \frac{\pd\alpha_{jb}}{\pd\lambda^{ia}}
        +
        [\alpha_{jb},\beta_{ia}]
        \right) \label{eq:symterm}\\
    -\frac{1}{2}\Omega_{ia,jb,kc}
        && \frac{1}{4}\left(
        \frac{\pd\beta_{jb}}{\pd\lambda^{ia}}
        -
        \frac{\pd\alpha_{ia}}{\pd\lambda^{jb}}
        +
        [\alpha_{ia},\beta_{jb}]
        -
        \frac{\pd\beta_{ia}}{\pd\lambda^{jb}}
        +
        \frac{\pd\alpha_{jb}}{\pd\lambda^{ia}}
        -
        [\alpha_{jb},\beta_{ia}]
        \right) \label{eq:altterm}
\end{align}
Note that, by the same calculations as we performed for the derivative of the metric,
\begin{align*}
    \left.\frac{\pd\alpha_{ia}}{\pd\lambda^{jb}}\right|_{\vec\lambda =0 }
        &= - \delbar_{P,1}\inv [A_{jb},\delbar_{P,1}\inv A_{ia}],
    &\left.\frac{\pd\beta_{jb}}{\pd\lambda^{ia}}\right|_{\vec\lambda =0 }
        &= - \delbar_{P,2}\inv [A_{ia},\delbar_{P,2}\inv A_{jb}]
\end{align*}
Since the equations of motion and flatness equations have well-defined coordinate-free descriptions, to compare these equations it is enough to compare them at every point $P\in\mc{M}$ in coordinates centred at $P$. Thus from now on I will omit the $\left.\phantom{\frac{a}{b}}\right|_{\vec\lambda=0}$ symbol from my calculations -- all terms are to be implicitly evaluated at the origin of the coordinate system.

Notice that a priori there are many (infinitely!) more ``flatness equations'', since the equations of motion are indexed by $(ia,kc)$ while the flatness equations are indexed by $(ia,z\in C)$. To obtain a matching number of equations, we take the flatness equations and pair them with each basis vector $A_{kc}$, i.e.\ we take
    \[   \int_C\omega\wed\la - , A_{kc} \ra.   \]
Doing this for \eqref{eq:order2term} gives
\begin{align*}
    \int_C \omega \wed\la \beta_{ia} - \alpha_{ia}, A_{kc}\ra
        &= \int_C \omega\wed \la \delbar_2\inv A_{ia}, A_{kc}\ra
            - \int_C\omega\wed\la \delbar_1\inv A_{ia}, A_{kc}\ra \\
        &= - \int_C\omega\wed\la \delbar_1\inv A_{ia}, A_{kc}\ra
            - \int_C\omega\wed\la A_{ia}, \delbar_1\inv A_{kc}\ra \\
        &= - g_{ia,kc} = -\frac{1}{2}\left(2 g_{ia,kc}\right).
\end{align*}
Next, let's compare the terms in \eqref{eq:symterm}. Writing the equations of motion explicitly gives
\begin{align*}
    \frac{1}{2}&\left(
            \frac{\pd g_{kc,jb}}{\pd\lambda^{ia}}
                +\frac{\pd g_{ia,kc}}{\pd\lambda^{jb}}
                -\frac{\pd g_{ia,jb}}{\pd\lambda^{kc}}
        \right) \\
    & =
        \underbrace{\frac{1}{2}\int_C\omega\wed\la [A_{jb},\delbar_1\inv A_{ia}],\delbar_2\inv A_{kc}\ra}_{(I)}
        +\underbrace{\frac{1}{2}\int_C\omega\wed\la [A_{jb},\delbar_1\inv A_{kc}],\delbar_2\inv A_{ia}\ra}_{(II)} \\
    &\qquad
        +\underbrace{\frac{1}{2}\int_C\omega\wed\la [A_{ia},\delbar_1\inv A_{jb}],\delbar_2\inv A_{kc}\ra}_{(III)}
        +\underbrace{\frac{1}{2}\int_C\omega\wed\la [A_{ia},\delbar_1\inv A_{kc}],\delbar_2\inv A_{jb}\ra}_{(IV)} \\
    &\qquad
        -\underbrace{\frac{1}{2}\int_C\omega\wed\la [A_{kc},\delbar_1\inv A_{ia}],\delbar_2\inv A_{jb}\ra}_{(V)}
        -\underbrace{\frac{1}{2}\int_C\omega\wed\la [A_{kc},\delbar_1\inv A_{jb}],\delbar_2\inv A_{ia}\ra}_{(VI)},
\end{align*}
while the corresponding term for the flatness equations gives
\begin{align*}
    \int_C &\omega\wed\la
    \frac{1}{4}\left(
        \frac{\pd\beta_{jb}}{\pd\lambda^{ia}}
        -
        \frac{\pd\alpha_{ia}}{\pd\lambda^{jb}}
        +
        [\alpha_{ia},\beta_{jb}]
        +
        \frac{\pd\beta_{ia}}{\pd\lambda^{jb}}
        -
        \frac{\pd\alpha_{jb}}{\pd\lambda^{ia}}
        +
        [\alpha_{jb},\beta_{ia}]
        \right)
    , A_{kc}\ra \\
    &=
        \frac{1}{4}\int_C\omega\wed\left\langle
            -\delbar_2\inv[A_{ia},\delbar_2\inv A_{jb}]
            +\delbar_1\inv[A_{jb},\delbar_1\inv A_{ia}]
            +[\delbar_1\inv A_{ia},\delbar_2\inv A_{jb}]\right.\\
    &\mspace{100mu}\left.
            -\delbar_2\inv[A_{jb},\delbar_2\inv A_{ia}]
            +\delbar_1\inv[A_{ia},\delbar_1\inv A_{jb}]
            +[\delbar_1\inv A_{jb},\delbar_2\inv A_{ia}]
            ,
            A_{kc}
        \right\rangle \\
    &=
        -\frac{1}{4}\underbrace{\int_C\omega\wed\la
            [A_{ia},\delbar_1\inv A_{kc}], \delbar_2\inv A_{jb}
        \ra}_{(IV)}
        -\frac{1}{4}\underbrace{\int_C\omega\wed\la
            [A_{jb},\delbar_1\inv A_{ia}], \delbar_2\inv A_{kc}
        \ra}_{(I)} \\
    &\qquad
        +\frac{1}{4}\underbrace{\int_C\omega\wed\la
            [A_{kc},\delbar_1\inv A_{ia}], \delbar_2\inv A_{jb}
        \ra}_{(V)}
        -\frac{1}{4}\underbrace{\int_C\omega\wed\la
            [A_{jb},\delbar_1\inv A_{kc}], \delbar_2\inv A_{ia}
        \ra}_{(II)} \\
    &\qquad
        -\frac{1}{4}\underbrace{\int_C\omega\wed\la
            [A_{ia},\delbar_1\inv A_{jb}], \delbar_2\inv A_{kc}
        \ra}_{(III)}
        +\frac{1}{4}\underbrace{\int_C\omega\wed\la
            [A_{kc},\delbar_1\inv A_{jb}], \delbar_2\inv A_{ia}
        \ra}_{(VI)} \\
    &= -\frac{1}{2}\left[
        \frac{1}{2}\left(
            \frac{\pd g_{kc,jb}}{\pd\lambda^{ia}}
                +\frac{\pd g_{ia,kc}}{\pd\lambda^{jb}}
                -\frac{\pd g_{ia,jb}}{\pd\lambda^{kc}}
        \right)
    \right]
\end{align*}
where we have freely made use of the adjointness of $\delbar_1\inv$ and $-\delbar_2\inv$, and the underbraced indices indicate how to match the flatness and equation of motion terms.

Finally, consider the flatness expression in \eqref{eq:altterm}:
\begin{align*}
    \int_C &\omega\wed\la
    \frac{1}{4}\left(
        \frac{\pd\beta_{jb}}{\pd\lambda^{ia}}
        -
        \frac{\pd\alpha_{ia}}{\pd\lambda^{jb}}
        +
        [\alpha_{ia},\beta_{jb}]
        -
        \frac{\pd\beta_{ia}}{\pd\lambda^{jb}}
        +
        \frac{\pd\alpha_{jb}}{\pd\lambda^{ia}}
        -
        [\alpha_{jb},\beta_{ia}]
        \right)
    , A_{kc}\ra \\
    &=
        \frac{1}{4}\int_C\omega\wed\left\langle
            -\delbar_2\inv[A_{ia},\delbar_2\inv A_{jb}]
            +\delbar_1\inv[A_{jb},\delbar_1\inv A_{ia}]
            +[\delbar_1\inv A_{ia},\delbar_2\inv A_{jb}]\right.\\
    &\mspace{100mu}\left.
            +\delbar_2\inv[A_{jb},\delbar_2\inv A_{ia}]
            -\delbar_1\inv[A_{ia},\delbar_1\inv A_{jb}]
            -[\delbar_1\inv A_{jb},\delbar_2\inv A_{ia}]
            ,
            A_{kc}
        \right\rangle \\
    &= \frac{1}{4}\int_C \omega\wed\underbrace{
        \la
        [A_{ia},\delbar_2]\inv A_{jb}], \delbar_1\inv A_{kc}
        \ra
    }_{(23)}
        -\frac{1}{4}\int_C \omega\wed\underbrace{
        \la
        [A_{jb},\delbar_1\inv A_{ia}], \delbar_2\inv A_{kc}
        \ra
        }_{(12)} \\
    &\qquad
        +\frac{1}{4}\int_C \omega\wed\underbrace{
        \la
        [\delbar_1\inv A_{ia}, \delbar_2\inv A_{jb}], A_{kc}
        \ra
        }_{(123)}
        -\frac{1}{4}\int_C \omega\wed\underbrace{
        \la
        [A_{jb},\delbar_2\inv A_{ia}], \delbar_1\inv A_{kc}
        \ra
        }_{(132)} \\
    &\qquad
        +\frac{1}{4}\int_C \omega\wed\underbrace{
        \la
        [A_{ia},\delbar_1\inv A_{jb}], \delbar_2\inv A_{kc}
        \ra
        }_{(1)}
        -\frac{1}{4}\int_C \omega\wed\underbrace{
        \la
        [\delbar_1\inv A_{jb}, \delbar_2\inv A_{ia}], A_{kc}
        \ra
        }_{(13)} \\
    &= \frac{1}{4}\sum_{s\in S_3} (-1)^s\int_C\omega\wed
            \la
            [A_{s(ia)}, \delbar_1\inv A_{s(jb)}], \delbar_2\inv A_{s(kc)}
            \ra \\
    &= -\frac{1}{2}\left(
            -\frac{1}{2}\Omega_{ia,jb,kc}
        \right)
\end{align*}
where the underbraced incides now indicate which term corresponds to which element of the symmetric group $S_3$. But now observe that we have
    \[   \int_C\omega\wed \la \text{Flatness}, A_{kc} \ra
            =
            -\frac{1}{2}\text{EOM}_{kc}   \]
for all basis elements $A_{kc}$, completing the proof of the theorem.
\end{proof}

\section{Example: $\mb{CP}^1$ with two marked points}\label{s:CP1-eg}

Let us now consider an example where we can write things out fairly explicitly: the case of $\mb{CP}^1$ with two marked points.

Let $C=\mb{CP}^1$ equipped with coordinate $z$ and 1-form
\begin{align}\label{eq:cp1-1form}
	\omega &= \frac{(z-p_1)(z-p_2)}{z^2}dz,\quad p_1\neq p_2,\, p_i\neq 0.
	\end{align}
Note that $\omega$ has second order poles at $0$ and $\infty$, and simple zeroes at $p_1$ and $p_2$. Let $Q$ denote the divisor $0 + \infty$. We wish to consider $\BunSt_G(\mb{CP}^1, Q)$, the moduli of $G$-bundles on $\mb{CP}^1$ with a trivialisation on $Q$.

Assume that $G$ is simple simply-connected, and consider the cohomology vanishing condition
	\[	H^0(\mb{CP}^1,\mf{g}_P(D_1))=0,	\]
where we choose $D_1 = p_1 - 0 - \infty$. In this example, the cohomology vanishing condition implies that the bundle $P$ is trivializable.\footnote{To see this, suppose $E=\mc{O}(i_1)\oplus\cdots\mc{O}(i_n)$ with $i_1 + \cdots + i_n = 0$. Then $E\tens\mc{O}(-1)$ has vanishing $H^0$ if and only if all of the $i_k<1$, which forces $i_1 = \cdots = i_n = 0$.} So, fix a trivialized bundle with trivialisations at $0$ and $\infty$, $P = \mb{CP}^1\times G$. We can always act on the bundle to simultaneously change the trivialisations at $0$ and $\infty$, so fix the trivialisation at $\infty$ to agree with the trivialisation of the bundle. Our remaining degree of freedom is the trivialisation of the bundle at $0$, and so we see that $\mc{M} \cong G$. As a check on this, observe that we have an agreement of tangent spaces
	\[	T_P\mc{M} = H^1(C;\mf{g}\tens\mc{O}(-0-\infty))
					\cong \mf{g}\tens H^1(C;\mc{O}(-2))
					\cong \mf{g}.	\]
Note that in this example we can explicitly determine the form of the Szeg\"o kernel. For the trivial $G$-bundle on $\mb{CP}^1$ this will be a $\mf{g}\tens\mf{g}$-valued meromorphic function $\mc{S}(z,z')$ on $\mb{CP}^1\times\mb{CP}^1$ with
\begin{itemize}
	\item simple pole along $z=z'$ with residue the Casimir element $c\in\mf{g}\tens\mf{g}$,
	\item simple poles at $z=p_1$ and $z'=p_2$, and
	\item simple zeroes at $z=0,\infty$, $z'=0,\infty$.
	\end{itemize}
Given these conditions, it is easy to check that the kernel is
\begin{align}\label{eq:szego-cp1}
	\mc{S}(z,z')
		&= \frac{zz'}{(z-z')(z-p_1)(z'-p_2)}c
\end{align}

Now, we want to calculate the metric and 3-form on $\mc{M}$. Since $\mc{M}$ has a transitive $G\times G$-symmetry, given by left and right $G$-multiplication on the trivialisation at $0$, and the expressions for the metric and 3-form are independent of the trivialisation at $\infty$, it suffices to calculate at the basepoint $\ast$ of $\mc{M}$ given by the trivial trivialisation at $\infty$; i.e.\ we are looking at multiples of the usual bi-invariant metric and 3-form on $G$.

Let $\{t_a\}$ be a basis for $\mf{g}$, and take as representatives for a basis of $T_\ast\mc{M}$ the forms
	\[	A_a = (p_1-p_2)\frac{z}{(z-p_1)(z-p_2)}\delta_{|z-p_1|=\epsilon}t_a	\]
These correspond to the basis $\{t_a\}$ under the identification of $\mc{M}$ with $G$, which we can see in the following way: a tangent vector to $\mc{M}$ is represented by a form $A\in\Omega^{0,1}(\mb{CP}^1;\mc{O}(-0-\infty))\tens\mf{g}$, and if it is nonzero it has no antiderivative in $\Omega^{0,0}(\mb{CP}^1;\mc{O}(-0-\infty))\tens\mf{g}$. It \emph{does} however have an antiderivative $\chi\in\Omega^{0,0}(\mb{CP}^1;\mc{O}(-\infty))\tens\mf{g}$---i.e.\ we allow the antiderivative to be nonzero at $0$---and then $\chi(0)\in\mf{g}$ is precisely the infinitesimal variation of the framing at $0$.

Observing that
	\[	(p_1-p_2)\frac{z}{(z-p_1)(z-p_2)}
			= \frac{p_1}{z-p_1} - \frac{p_2}{z-p_2}	\]
we have that $A_a = \delbar\chi_a$ for
	\[	\chi_a = -\left(
			\frac{p_1}{z-p_1}\delta_{|z-p_1|\geq\epsilon}
			+
			\frac{p_2}{z-p_2}\delta_{|z-p_1|\leq\epsilon}
	\right)t_a	\]
and so $\chi_a(0)=t_a$.

Note that these also have singular antiderivatives
\begin{align*}
	\delbar_1\inv A_a
		&= (p_1-p_2)\frac{z}{(z-p_1)(z-p_2)}\delta_{|z-p_1|\leq\epsilon}t_a,
	&\delbar_2\inv A_a
		&= -(p_1-p_2)\frac{z}{(z-p_1)(z-p_2)}\delta_{|z-p_1|\geq\epsilon}t_a,
\end{align*}
so that
\begin{align*}
	g_{ab}
		&= \int_{\mb{CP}^1}\omega\wed\la\delbar_1\inv A_a,A_b\ra
			+\int_{\mb{CP}^1}\omega\wed\la\delbar_1\inv A_b,A_a\ra
		= \kappa_{ab}\oint_{|z-p_1|=\epsilon}
				\frac{dz}{z-p_1}\frac{(p_1-p_2)^2}{z-p_2}
		= 2\pi i (p_1 - p_2)\kappa_{ab}
\end{align*}
and
\begin{align*}
	\int_{\mb{CP}^1}\omega \wed\la [A_a,\delbar_1\inv A_b], \delbar_2\inv A_c\ra
		&= \frac{1}{3}\oint_{|z-p_1|=\epsilon}\frac{dz}{(z-p_1)^2}\frac{z}{(z-p_2)^2}\la[t_a,t_b],t_c\ra (p_1 - p_2)^3 \\
		&= \frac{2\pi i}{3}\kappa_{cd}f_{ab}^d \frac{d}{dz}\left.\left(\frac{z}{(z-p_2)^2}\right)\right|_{z=p_1}\cdot (p_1 - p_2)^3
		= -\frac{2\pi i}{3}\kappa_{cd}f_{ab}^d(p_1 + p_2).
\end{align*}
Since this expression is already antisymmetric it is equal to $\Omega_{abc}$. This matches the expression for $\Omega$ obtained by a different derivation in \cite[\S10.2.3]{Costello:2019tri}.

\begin{remark}
This example shows that the main theorem of this paper can be considered a vast generalisation of the well-known result that the harmonic map equation for surfaces mapping to Lie groups has a Lax pair formulation \cite{Poh76}. It would be interesting to see whether this can be leveraged into an understanding of the moduli space of solutions to the classical equations of motion, as was done for harmonic maps to Lie groups in \cite{Uhl89}.
\end{remark}


\subsection{The 1-loop $\beta$-function}\label{ss:CP1-betafnc}

Finally, I will give a sketch of some evidence for Conjecture
\ref{conj:cost}. Recall that this conjecture of Costello claims
that the one-loop $\beta$-function may be identified with a vector
field induced by a flow on the moduli space of Riemann surfaces
equipped with a holomorphic 1-form $\omega$ of the type we have
been considering, together with a decomposition of the zeroes of
$\omega$ into two equally sized groups, $D_1$ and $D_2$.

In our situation, the constraints on the flow are that:
\begin{itemize}
	\item $\mc{P}_1 = \oint_{|z|=\epsilon}\omega$ is fixed, and
	\item $\mc{P}_2 = \int_{p_1}^{p_2}\omega$ varies to first order in the flow parameter $\varepsilon$ as $\mc{P}_2\to\mc{P}_2 + \varepsilon$.
\end{itemize}
We can achieve this by flowing the points $p_1, p_2$ linearly,
\begin{align}\label{eq:moduli_flow}
	p_1 \to p_1 + \varepsilon c
		\quad\text{and}\quad
		p_2 \to p_2 - \varepsilon c
\end{align}
and making a judicious choice of constant $c$.

It is straightforward to calculate that the periods are given by
\begin{align}
	\mc{P}_1 &= -2\pi i (p_1 + p_2), \label{eq:closed_period}\\
	\mc{P}_2 &= 2(p_2 - p_1) + \frac{\mc{P}_1}{2\pi i}\log\left(\frac{p_2}{p_1}\right) \mod\mc{P}_1. \label{eq:open_period}
\end{align}
Under the flow \eqref{eq:moduli_flow} we have that
	\[	\mc{P}_2 \to \mc{P}_2 + \varepsilon c \frac{(p_1 - p_2)^2}{p_1p_2}
								+O(\varepsilon^2),	\]
so our constraints are satisfied if we take $c = \frac{p_1p_2}{(p_1 - p_2)^2}.$ Similarly, under this flow the metric coefficients vary as
\begin{align*}
	g_{ab} \to g_{ab} + 4\pi i \varepsilon c \kappa_{ab}
		= \left(1 + \varepsilon\frac{2 p_1p_2}{(p_1 - p_2)^3}\right)g_{ab}.
\end{align*}
By the definition of the one-loop $\beta$-function, we would therefore like to show that
	\[	\beta_{ab} \stackrel{\text{?}}{=}\frac{2 p_1p_2}{(p_1 - p_2)^3}g_{ab}.	\]

According to \cite{CFMP85}, the one-loop $\beta$-function for the
metric is
\begin{align}\label{eq:metric_beta}
	\frac{1}{\alpha'}\beta_{\mu\nu}
		&= \Ric_{\mu\nu} - \frac{1}{4}H_{\mu\lambda\kappa}H_\nu^{\phantom{\nu}\lambda\kappa},
\end{align}
where $H$ is proportional to $\Omega$, $H = \tilde{c}\Omega$. I will pin down the specific value of $\tilde{c}$ shortly.

The Ricci curvature is invariant under constant scalings of the metric, and so $\Ric_{ab} = -\frac{1}{4}\kappa_{ab}$. We therefore have that
\begin{align*}
	-\frac{4}{\alpha'}\beta_{ab}
		&= \kappa_{ab} + \tilde{c}^2g^{ce}g^{df}\Omega_{acd}\Omega_{bef} \\
		&= \kappa_{ab} + \frac{1}{(2\pi i)^2 (p_1 - p_2)^2}\frac{(2\pi i)^2 (p_1 + p_2)^2 \tilde{c}^2}{9}\underbrace{\kappa^{ce}\kappa^{df}\kappa_{dg}f_{ac}^g \kappa_{fh}f_{be}^h}_{-\kappa_{ab}} \\
		&= \left(
			1 - \frac{\tilde{c}^2}{9}\frac{(p_1+p_2)^2}{(p_1 - p_2)^2}
		\right)\kappa_{ab}
\end{align*}
To determine $\tilde{c}$ we observe that $\displaystyle\lim_{\substack{p_1 \to 0 \\ p_2 \to\infty}}\frac{\omega}{p_1 + p_2}$ is the 1-form that gives rise to the WZW model \cite{Costello:2019tri}, and that the $\beta$-function of that theory vanishes. The result of rescaling the 1-form is a constant rescaling of the metric and 3-form. The Ricci tensor is invariant under constant rescalings, as is the expression $g^{ce}g^{df}\Omega_{acd}\Omega_{bef}$ (the rescaling of the 3-forms cancels with the rescaling of the inverse metrics), so the $\beta$-function is unchanged by rescalings and we may simply pass to the $p_1\to 0, p_2\to\infty$ limit
	\[	1 - \frac{\tilde{c}^2}{9}\frac{(p_1+p_2)^2}{(p_1 - p_2)^2}
		\to 1 - \frac{\tilde{c}^2}{9} = 0	\]
i.e.\ $\tilde{c} = 3$. We then have
\begin{align*}
	-\frac{4}{\alpha'}\beta_{ab}
		&= \left(
			1 - \frac{(p_1+p_2)^2}{(p_1 - p_2)^2}
		\right)\kappa_{ab}
		= \frac{(p_1 - p_2)^2 - (p_1 + p_2)^2}{(p_1 - p_2)^2}\kappa_{ab}
		= -\frac{4p_1 p_2}{(p_1-p_2)^2}\kappa_{ab}
\end{align*}
so that
\begin{align*}
	\beta_{ab} &= \alpha'\frac{p_1p_2}{(p_1-p_2)^2}\kappa_{ab}
				= \frac{\alpha'}{2\pi i}\frac{p_1 p_2}{(p_1 - p_2)^3}g_{ab}
\end{align*}
which gives the desired result upon setting $\alpha' = 4\pi i$.

\begin{remark}\label{rk:more-conj-evidence}
An alternative proof of the conjecture in this example -- in fact, a proof of the conjecture for an arbitrary number of double poles on the Riemann sphere\footnote{Many thanks to Benoit Vicedo for this reference and observation.} -- can be deduced from the results of \cite{DLSS21}, in which the 1-loop RG flow of the models introduced in \cite{DLMV19a,DLMV19b} is completely determined.
\end{remark}


%

%




\medskip


\bibliography{szego-bib}

\begin{thebibliography}{DLMV19b}

\bibitem[Arn89]{Arn89}
V.~I. Arnol'd.
\newblock {\em Mathematical methods of classical mechanics}, volume~60 of {\em
  Graduate Texts in Mathematics}.
\newblock Springer-Verlag, New York, second edition, 1989.
\newblock Translated from the Russian by K. Vogtmann and A. Weinstein.

\bibitem[BBT03]{BBT03}
Olivier Babelon, Denis Bernard, and Michel Talon.
\newblock {\em Introduction to classical integrable systems}.
\newblock Cambridge Monographs on Mathematical Physics. Cambridge University
  Press, Cambridge, 2003.

\bibitem[Byk16]{Byk16}
Dmitri Bykov.
\newblock Complex structures and zero-curvature equations for $\sigma$-models.
\newblock {\em Physics Letters B}, 760:341--344, 2016.

\bibitem[BZB03]{BZB03}
David Ben-Zvi and Indranil Biswas.
\newblock Theta functions and {S}zeg\"{o} kernels.
\newblock {\em Int. Math. Res. Not.}, (24):1305--1340, 2003.

\bibitem[CFMP85]{CFMP85}
C.~G. Callan, D.~Friedan, E.~J. Martinec, and M.~J. Perry.
\newblock Strings in background fields.
\newblock {\em Nuclear Phys. B}, 262(4):593--609, 1985.

\bibitem[Cos13]{C13}
Kevin Costello.
\newblock Supersymmetric gauge theory and the yangian, 2013.

\bibitem[CY19]{Costello:2019tri}
Kevin Costello and Masahito Yamazaki.
\newblock {Gauge Theory And Integrability, III}.
\newblock 8 2019.

\bibitem[DLMV19a]{DLMV19b}
F.~Delduc, S.~Lacroix, M.~Magro, and B.~Vicedo.
\newblock Assembling integrable {$\sigma$}-models as affine {G}audin models.
\newblock {\em J. High Energy Phys.}, (6):017, 86, 2019.

\bibitem[DLMV19b]{DLMV19a}
F.~Delduc, S.~Lacroix, M.~Magro, and B.~Vicedo.
\newblock Integrable coupled $\ensuremath{\sigma}$ models.
\newblock {\em Phys. Rev. Lett.}, 122:041601, Jan 2019.

\bibitem[DLSS21]{DLSS21}
Fran\c{c}ois Delduc, Sylvain Lacroix, Konstantinos Sfetsos, and Konstantinos
  Siampos.
\newblock R{G} flows of integrable {$\sigma$}-models and the twist function.
\newblock {\em J. High Energy Phys.}, (2):Paper No. 065, 43, 2021.

\bibitem[FT07]{FT86}
Ludwig~D. Faddeev and Leon~A. Takhtajan.
\newblock {\em Hamiltonian methods in the theory of solitons}.
\newblock Classics in Mathematics. Springer, Berlin, english edition, 2007.
\newblock Translated from the 1986 Russian original by Alexey G. Reyman.

\bibitem[LV21]{LV21}
Sylvain Lacroix and Beno\^{\i}t Vicedo.
\newblock Integrable {$\mathcal{E}$}-models, 4d {C}hern-{S}imons theory and
  affine {G}audin models. {I}. {L}agrangian aspects.
\newblock {\em SIGMA Symmetry Integrability Geom. Methods Appl.}, 17:Paper No.
  058, 45, 2021.

\bibitem[{nLa}21]{nlab:grothendieck_connection}
{nLab authors}.
\newblock {{G}}rothendieck connection.
\newblock \url{http://ncatlab.org/nlab/show/Grothendieck%20connection}, April
  2021.

\bibitem[Poh76]{Poh76}
K.~Pohlmeyer.
\newblock Integrable {H}amiltonian systems and interactions through quadratic
  constraints.
\newblock {\em Comm. Math. Phys.}, 46(3):207--221, 1976.

\bibitem[Uhl89]{Uhl89}
Karen Uhlenbeck.
\newblock Harmonic maps into {L}ie groups: classical solutions of the chiral
  model.
\newblock {\em J. Differential Geom.}, 30(1):1--50, 1989.

\bibitem[Wri15]{Wr15}
Alex Wright.
\newblock Translation surfaces and their orbit closures: an introduction for a
  broad audience.
\newblock {\em EMS Surv. Math. Sci.}, 2(1):63--108, 2015.

\bibitem[Zar19]{Zar19}
K.~Zarembo.
\newblock Integrability in sigma-models.
\newblock In {\em Integrability: from statistical systems to gauge theory},
  pages 205--247. Oxford Univ. Press, Oxford, 2019.

\end{thebibliography}
\bibliographystyle{alpha}

\end{document}